\numberwithin{equation}{section}
\theoremstyle{plain}
\newaliascnt{theorem}{equation}
\newtheorem{thm}[theorem]{Theorem}
\newtheorem*{thm*}{Theorem}
\newaliascnt{prop}{equation}
\newtheorem{prop}[prop]{Proposition}
\newaliascnt{lemma}{equation}
\newtheorem{lemma}[lemma]{Lemma}
\newaliascnt{corollary}{equation}
\theoremstyle{definition}
\newaliascnt{remark}{equation}
\newtheorem{remarkthm}[remark]{Remark}
\newenvironment{remark}[1][]{\begin{remarkthm}[#1]\pushQED{\qed}}{\popQED \end{remarkthm}}
\newaliascnt{defn}{equation}
\newtheorem{defnthm}[defn]{Definition}
\newenvironment{defn}[1][]{\begin{defnthm}[#1]\pushQED{\qed}}{\popQED \end{defnthm}}
\newaliascnt{example}{equation}
\newtheorem{examplethm}[example]{Example}
\newenvironment{example}[1][]{\begin{examplethm}[#1]\pushQED{\qed}}{\popQED \end{examplethm}}
\crefname{theorem}{Theorem}{Theorems}
\crefname{prop}{Proposition}{Propositions}
\crefname{section}{Section}{Sections}
\crefname{subsection}{Section}{Sections}
\crefname{defn}{Definition}{Definitions}
\crefname{remark}{Remark}{Remarks}
\crefname{lemma}{Lemma}{Lemmas}
\crefname{corollary}{Corollary}{Corollaries}
\crefname{example}{Example}{Examples}
\newcommand{\mb}[1]{\mathbf{#1}}
\DeclareMathOperator{\Ad}{Ad}
\DeclareMathOperator{\Hom}{Hom}
\DeclareMathOperator{\corank}{corank}
\newcommand{\wt}{\widetilde}
\newcommand{\wh}{\widehat}
\newcommand{\mr}{\mathring}
\newcommand{\ot}{\otimes}
\newcommand{\comments}[1]{}
\newcommand{\ol}[1]{\overline{#1}}
\newcommand{\dol}[1]{\overline{\overline{#1}}}
\def\al{\alpha}
\def\de{\delta}
\def\De{\Delta}
\def\io{\iota}
\def\ka{\kappa}
\def\th{\theta}
\def\si{\sigma}
\def\Si{\Sigma}
\def\om{\omega}
\def\ep{\epsilon}
\def\vphi{\varphi}
\def\CA{{\mathcal A}}
\def\CN{{\mathcal N}}
\def\CX{{\mathcal X}}
\def\BC{{\mathbb C}}
\def\BQ{{\mathbb Q}}
\def\BZ{{\mathbb Z}}
\newcommand{\fg}{\mathfrak g}
\newcommand{\xqedhere}[2]{
  \rlap{\hbox to#1{\hfil\llap{\ensuremath{#2}}}}}
\newcommand\isom{\overset{\sim}{\to}}
\newcommand\mono{\hookrightarrow}
\newcommand\into\mono
\newcommand\epi{\twoheadrightarrow}
\newcommand\onto\epi
\newcommand\<\langle
\renewcommand\>\rangle
\newcommand\sminus{\smallsetminus}
\begin{document}

\title{Q-Systems, Factorization Dynamics, and the Twist Automorphism}
\author{Harold Williams}
\address{Harold Williams\newline
University of California, Berkeley\newline
Department of Mathematics\newline
Berkeley CA 94720\newline
USA}
\email{harold@math.berkeley.edu}

\begin{abstract}
We provide a concrete realization of the cluster algebras associated with $Q$-systems as amalgamations of cluster structures on double Bruhat cells in simple algebraic groups.  For nonsimply-laced groups, this provides a cluster-algebraic formulation of $Q$-systems of twisted type.  It also yields a uniform proof of the discrete integrability of these $Q$-systems by identifying them with the dynamics of factorization mappings on quotients of double Bruhat cells.  On the double Bruhat cell itself, we find these dynamics are closely related to those of the Fomin-Zelevinsky twist map.  This leads to an explicit formula expressing twisted cluster variables as Laurent monomials in the untwisted cluster variables obtained from the corresponding mutation sequence.  This holds for Coxeter double Bruhat cells in any symmetrizable Kac-Moody group, and we show that in affine type the analogous factorization mapping is also integrable.



\end{abstract}

\maketitle

\section{Introduction}

$Q$-systems are nonlinear recurrence relations associated with affine Dynkin diagrams, arising in the Bethe ansatz and the representation theory of Yangians and quantum loop algebras \cite{Kirillov1990,Nakajima2000,Hernandeza,Hernandez2010}.  There is by now a large literature related to them and their relatives (see \cite[Section 13]{Kuniba2011} for a survey), and in particular it was discovered in \cite{Kedem2013,DiFrancesco2013} that they may be realized as sequences of cluster transformations in certain cluster algebras.  Our aim is to provide concrete realizations of these cluster algebras in terms of double Bruhat cells and their amalgamations.  The relevant sequences of cluster transformations are then identified with factorization mappings on quotients of double Bruhat cells, leading to their discrete integrability.  Moreover, these sequences provide an alternate description of the Fomin-Zelevinsky twist automorphism in terms of cluster transformations, yielding explicit formulas relating twisted and untwisted cluster variables.  

Informally, a cluster structure on a variety consists of an infinite family of toric charts with transition functions of a specific form, called cluster transformations \cite{Fomin2001}.  A basic class of examples are those on the double Bruhat cells $G^{u,v}$ of a simple algebraic group \cite{Berenstein2005}.  When $u=v$ is a Coxeter element $c$, we identify a $Q$-system whose associated cluster structure is related to that of $G^{c,c}$ by amalgamation, a type of quotient operation on cluster structures.

\begin{thm*}\emph{(\ref{prop:coxconjquot}, \ref{prop:qsystemsasclusters})}
The conjugation quotient $G^{c,c}/H$ has a natural cluster structure obtained from that of $G^{c,c}$ by amalgamation, whose exchange matrix is of the form
\[
B_C \coloneqq \begin{pmatrix} 0 & C^t \\ -C^t & 0 \end{pmatrix}.
\]
Up to normalization, there is a $Q$-system which can be realized by exchange relations in the corresponding cluster algebra; its type is the affinization of that of $G$ when this is simply-laced, otherwise it is of a twisted type related to that of $G$ by folding.
\end{thm*}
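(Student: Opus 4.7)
The plan is to establish the two constituent claims (\ref{prop:coxconjquot} and \ref{prop:qsystemsasclusters}) in sequence.

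For the first claim, the starting point is the Berenstein--Fomin--Zelevinsky cluster structure on $G^{c,c}$ associated to a double reduced word. The natural choice is the word obtained by concatenating two copies of a fixed reduced expression $s_{i_1}\cdots s_{i_r}$ for $c$. The resulting initial seed has $3r$ generalized-minor cluster variables, of which $2r$ are frozen and split into $r$ natural pairs, one per simple root, corresponding to the first and last occurrences of each letter in the double word. The key point is that the $H$-action by conjugation scales each of the two frozen variables in such a pair by opposite characters, which is exactly the configuration to which the Fock--Goncharov amalgamation procedure applies. Carrying out the amalgamation identifies each pair into a single, now mutable, variable on $G^{c,c}/H$, and the new exchange matrix is obtained by summing the corresponding pairs of rows and columns of the BFZ matrix. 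A direct computation from the explicit form of the BFZ matrix for a Coxeter double word then yields the asserted block form $B_C$.

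For the second claim, the strategy is to exhibit a mutation sequence in the cluster algebra associated to $B_C$ whose exchange relations reproduce a $Q$-system. The quiver of $B_C$ is bipartite, with vertex set two copies of the Dynkin index set of $G$; mutating at all vertices on one side and then at all vertices on the other gives a period-two discrete dynamics whose exchange relations take the shape
\[
Q_{\alpha,k+1}\, Q_{\alpha,k-1} = \prod_{\beta} Q_{\beta,k}^{[C_{\beta\alpha}]_+} + \prod_{\beta} Q_{\beta,k}^{[-C_{\beta\alpha}]_+}.
\]
Up to an overall normalization this is the standard form of a $Q$-system. For simply-laced $G$ one has $C = C^t$, so the system recovers the untwisted $Q$-system associated to the affinization of the Lie type of $G$, matching the cluster-algebraic realization of \cite{Kedem2013,DiFrancesco2013}. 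For nonsimply-laced $G$, the appearance of $C^t$ rather than $C$ in $B_C$ encodes the Cartan data of the twisted affine algebra obtained by the relevant folding of the simply-laced cover of $G$, and the same exchange relations recover the twisted $Q$-system after the corresponding change of normalization.

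The main technical obstacle is anticipated in the nonsimply-laced portion of the second claim. For the simply-laced case the identification can essentially be read off by comparison with \cite{Kedem2013,DiFrancesco2013}; twisted $Q$-systems, however, are less uniformly documented, so confirming the match with the standard form requires a precise identification of the folding pattern relating $G$ to its simply-laced cover and care with the normalization conventions for twisted Kirillov--Reshetikhin characters. By contrast, the first claim should reduce to a concrete bookkeeping calculation once the double word is fixed.
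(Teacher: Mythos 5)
Your skeleton for both halves is the paper's: take the seed of the Coxeter double reduced word $\mb{i}=(-1,\dots,-r,1,\dots,r)$, glue the $r$ pairs of boundary frozen indices by an amalgamation (summing the corresponding rows and columns, which by \cref{lem:bmatrix} gives $B_C$), and then read the normalized $Q$-system off the bipartite dynamics $\wh{\mu}_\si$ of $B_C$; your uniform exchange relation is exactly \cref{eq:normqsys}, and the ``obstacle'' you flag for twisted types is resolved in the paper simply by rewriting the twisted $Q$-systems of \cite{Hatayama2002,Hernandez2010} uniformly in terms of the Cartan matrix $C$ of $Y_M$ and rescaling by roots of unity, so that part of your plan is sound.

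The genuine gap is in the quotient step: you run the amalgamation on the generalized-minor cluster variables, and your ``key point'' --- that conjugation by $H$ scales the two frozen variables in each pair by opposite characters --- is false for the minors. Every minor $\De^{\om_{|i_k|}}_{u_{\leq k},v_{>k}}$ is an $H\times H$ weight vector, so under conjugation by $h$ it scales by the character $u_{\leq k}\om_{|i_k|}-v_{>k}\om_{|i_k|}$: for this word the paired frozen minors scale by $\om_i-c^{-1}\om_i$ and $c\om_i-\om_i$ (not opposite), and even the \emph{unfrozen} minors scale by $c\om_i-c^{-1}\om_i$, which is nonzero in general. Hence neither the unfrozen minors nor the products of paired frozen minors are conjugation-invariant, and the proposed $A$-side gluing does not produce a chart on $G^{c,c}/H$; indeed amalgamation (\cref{def:amalg}) is by definition an operation on the $\CX$-tori, not on cluster variables. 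The correct statement, and the paper's argument in \cref{prop:coxconjquot}, is on the $\CX$-side: in the chart $x_{\mb{i}}$ of \cref{defn:Xdef}, conjugation by $h$ acts by $X_{-i}\mapsto\al_i(h)X_{-i}$, $X_{i+r}\mapsto\al_i(h)^{-1}X_{i+r}$ and fixes $X_1,\dots,X_r$, so the invariant ring is visibly generated by the $X_i$ and $X_{-i}X_{i+r}$, i.e.\ equals $\pi^*\BC[\CX_{\Si_C}]$, which is what makes the chart descend to $G^{c,c}_{\Ad}/H_{\Ad}$; one must also verify the hypotheses of \cref{def:amalg} (e.g.\ vanishing of the entries of $B_{\mb{i}}$ between glued indices) from \cref{lem:bmatrix}. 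The passage back to cluster variables, which your second half needs, is then not automatic either: the paper gets it because $B_C$ is nondegenerate, so $p_{\Si_C}\colon\CA_{\Si_C}\to\CX_{\Si_C}$ is a finite cover intertwining the two realizations of $\wh{\mu}_\si$ (this is how \cref{prop:qsystemintegrability} is deduced). So your write-up needs the $A$-variables replaced by $\CX$-coordinates in the quotient construction; as it stands, the step asserting invariance of the glued variables would fail.
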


When $G$ is of type $A_n$ this reformulates a result of \cite{Gekhtman2011}, and our use of amalgamation to construct cluster structures on adjoint quotients generalizes a construction of \cite{Fock2012}.  When $G$ is not simply-laced, this provides a novel cluster algebraic realization of the $Q$-systems of twisted type, though the cluster structures associated in \cite{DiFrancesco2013} to $Q$-systems of nonsimply-laced untwisted type do not fit into our framework.  We note that in the context of double Bruhat cells, what arises more naturally are the $Y$-system analogues of $Q$-systems, which differ by a standard change of variables.  In different language, we work directly with $\CX$-coordinates rather than cluster variables; this is essential in using amalgamation to form the quotient cluster structures we need.

Given the above result, the sequence of mutations underlying the $Q$-system gives rise to a corresponding sequence of cluster transformations on $G^{c,c}/H$.

\begin{thm*}\emph{(\ref{prop:factorization}, \ref{prop:qsystemintegrability})}
Under the identification of their associated cluster structures, the dynamics of the $Q$-system correspond to those of a certain factorization mapping on the quotient $G^{c,c}/H$.  In particular, these $Q$-systems are discrete integrable in the Liouville sense.
\end{thm*}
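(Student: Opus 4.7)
The plan is to identify the $Q$-system dynamics, realized as a sequence of cluster transformations on $G^{c,c}/H$, with an explicit geometric factorization mapping, and then to deduce discrete Liouville integrability from the Poisson-Lie structure of $G$. Having already realized the relevant cluster structure via amalgamation from that of $G^{c,c}$, I would begin by writing out the $Q$-system mutation sequence explicitly in the $\CX$-coordinate language. Fixing a reduced word $\underline{c} = s_{i_1}\cdots s_{i_n}$ for the Coxeter element, the cluster $\CX$-coordinates on $G^{c,c}$ admit a concrete realization in terms of the root-subgroup factors and torus factors appearing in the Berenstein-Fomin-Zelevinsky parametrization of the double Bruhat cell, with amalgamation identifying the two copies of $H$ that appear.

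The core step is to construct the factorization mapping on $G^{c,c}/H$ and verify that it implements the composed mutation. The natural candidate sends $g \in G^{c,c}$, written via the reduced-word parametrization as a product $g = g_{(1)} g_{(2)}$ of factors supported on the first and second copies of $c$, to the rearranged product $g_{(2)} g_{(1)}$; since this is conjugate to $g$, the map descends to $G^{c,c}/H$. Matching its induced action on $\CX$-coordinates with the $Q$-system mutation sequence is then a direct computation, proceeding letter by letter along $\underline{c}$ and using the standard transformation rules for root-subgroup factors under braid moves together with the explicit form of $\CX$-coordinate mutation. I expect this comparison to closely parallel the Fomin-Zelevinsky twist computations, in accordance with the relationship between factorization dynamics and the twist map advertised in the introduction.

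For Liouville integrability, I would then invoke standard Poisson-Lie machinery. The cluster structure endows $G^{c,c}/H$ with a Gekhtman-Shapiro-Vainshtein Poisson bracket compatible with the standard Poisson-Lie structure on $G$. Characters of finite-dimensional representations of $G$ are conjugation-invariant, hence descend to well-defined functions on $G^{c,c}/H$, and a standard $r$-matrix argument shows they are in involution with respect to this bracket. The factorization map preserves each such character because $g$ and $g_{(2)} g_{(1)}$ are conjugate in $G$. A dimension count, with $\dim G^{c,c}/H = 2\ell(c) = 2\rank G$ and $\rank G$ commuting Hamiltonians obtained from the fundamental characters, shows that these functions form a maximal involutive family; functional independence on a dense open subset follows from the fact that they already separate regular semisimple conjugacy classes, which meet $G^{c,c}$ densely.

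The main obstacle is the geometric-combinatorial matching in the middle step: giving a clean, case-uniform identification of the composed $Q$-system mutation with the factorization rewrite at the level of $\CX$-coordinates. This requires careful bookkeeping with the reduced-word data, and is made more subtle for nonsimply-laced $G$ by the folding involved, where one must reconcile the twisted $Q$-system combinatorics with the factorization on the unfolded group. Once this identification is in hand, the Liouville statement is essentially formal from the Poisson-Lie setup above.
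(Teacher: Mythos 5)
Your middle step and your factorization map are essentially the paper's: the map sends $g$, decomposed (up to $H$-conjugation) into its "$E$-part times torus element" and "$F$-part times torus element", to the product in the opposite order, and the matching with the composed mutation $\wh{\mu}=\mu_1\circ\cdots\circ\mu_r$ is carried out in the paper by repeatedly applying the known fact (\ref{prop:mixedmoves}) that swapping adjacent letters $-i,+i$ in a double reduced word is an $\CX$-coordinate mutation, then pushing this down to the quotient using \ref{prop:amalgcommutes} (amalgamation commutes with mutation); your letter-by-letter computation is the same mechanism. For the involutivity/independence of the invariant functions on $G^{c,c}/H$ you propose to re-derive what the paper simply cites (\ref{prop:continuousintegrability}, from Hoffmann et al.); your sketch is plausible in finite type, though "characters separate regular semisimple classes" is not by itself an argument for independence of differentials. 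Also, your worry about "reconciling the twisted combinatorics with the factorization on the unfolded group" is misplaced: in the paper's setup the twisted $Q$-system of type $X_N^{(\ka)}$ is matched directly to the seed $\Si_C$ for the nonsimply-laced finite-type Cartan matrix $C$ of $Y_M$, and the factorization lives on $G^{c,c}/H$ for $G$ of type $Y_M$ itself; no unfolding enters.

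The genuine gap is the bridge between the two kinds of variables. The $Q$-system variables are cluster variables: by \ref{prop:qsystemsasclusters} the recursion is the action of $\wh{\mu}_\si$ on the $\CA$-torus $\CA_{\Si_C}$, whereas the factorization dynamics and the conserved quantities (restrictions of conjugation-invariant functions) live on the $\CX$-torus $\CX_{\Si_C}\subset G^{c,c}_{\Ad}/H_{\Ad}$. Showing the factorization map is integrable therefore does not yet say anything about the $Q$-system; you need to transport the symplectic structure and the commuting Hamiltonians from the $\CX$-side to the $\CA$-side. The paper does this (proof of \ref{prop:qsystemintegrability}) by observing that $B_C$ is nondegenerate, so $p_{\Si_C}\colon \CA_{\Si_C}\to\CX_{\Si_C}$, $p_{\Si_C}^*X_i=\prod_j A_j^{(B_C)_{ij}}$, is a finite cover intertwining the two automorphisms $\wh{\mu}_\si$; pulling back along it equips $\CA_{\Si_C}$ with the symplectic form and the integrable system, and the $Q$-system map preserves them. (A final small step, also in the paper, is that the cluster realization is of the \emph{normalized} $Q$-system, which differs from the usual one by an invertible rescaling by roots of unity, so integrability transfers.) Your proposal never supplies this $\CA$--$\CX$ comparison, and without it the "in particular" clause of the statement does not follow.
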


Factorization mappings play an important role in discrete integrable systems, analogous to that of Lax forms in continuous-time integrable systems \cite{Deift1989,Moser1991,Veselov1991}.  Given a rule for factoring a group element $g$ as a product $g = hk$, one defines a corresponding factorization mapping by $g \mapsto kh$, typically restricted to some subvariety of $G$.  The factorization relevant for our purposes is defined via the decomposition of an element into opposite Borel subgroups, which is unambiguously defined up to conjugation by $H$.  In addition to making contact with $Q$-systems, the requirement that $c$ be a Coxeter element guarantees that the invariant functions on $G$ descend to form an integrable system on $G^{c,c}/H$, which has a natural symplectic structure \cite{Hoffmann2000}.  The factorization mapping manifestly preserves these invariant functions, hence as observed in \cite{Hoffmann2000} is discrete integrable in the Liouville sense.  The discrete integrability of the corresponding $Q$-system then follows as a corollary of our setup; in type $A_n$ this integrability is well-known from a number of different perspectives \cite{Gekhtman2011,DiFrancesco}.  In fact, $G^{c,c}/H$ is also equipped with an integrable system (a generalization of the relativistic periodic Toda lattice) when $G$ is an affine Kac-Moody group \cite{Williams2012}, leading to a corresponding discrete integrable system based on the analogous factorization mapping.

\begin{thm*}\emph{(\ref{prop:factorization})}
If $G$ is an affine Kac-Moody group, the factorization mapping on $G^{c,c}/H$ is also discrete integrable with conserved quantities derived from characters of finite-dimensional representations.
\end{thm*}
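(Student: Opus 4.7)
My plan is to follow the same two-step template used in the finite-dimensional case on the preceding page: first show the factorization mapping is, up to the ambiguity quotiented by $H$, a conjugation on $G^{c,c}$, so that Ad-invariant functions are automatically conserved; then invoke an external result producing a maximal Poisson-commuting family of such invariants. In the finite case the second ingredient was Hoffmann2000; here it will instead be Williams2012.

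For step one, I would set up the Birkhoff factorization in the affine Kac-Moody setting. For $g \in G^{c,c}$ in the open subset where $g = g_+ g_-$ with $g_\pm \in B_\pm$ is defined, the factorization is unique up to $g_+ \mapsto g_+ h$, $g_- \mapsto h^{-1} g_-$ with $h \in H$; the factorization map $g \mapsto g_- g_+ = g_+^{-1} g g_+$ is then conjugation by $g_+$, and the $H$-ambiguity in $g_+$ is exactly the $H$-conjugation quotient defining $G^{c,c}/H$. Hence the map descends to a well-defined self-map of $G^{c,c}/H$ preserving every Ad-invariant regular function on $G$; in particular every character $\chi_V$ of an integrable representation $V$ is preserved. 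For step two, Williams2012 establishes that for affine Kac-Moody $G$ the characters of integrable (equivalently, evaluation loop-group) representations generate a maximal Poisson-commuting family of functionally independent functions on $G^{c,c}/H$, realizing there the relativistic periodic Toda hierarchy. Combining the two steps yields a complete set of Poisson-commuting conserved quantities for the factorization map, giving discrete Liouville integrability.

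The main obstacle is purely infinite-dimensional bookkeeping rather than conceptual novelty. One must verify that $G^{c,c}$ lies in the open subset where the Birkhoff factorization is defined, that the ambiguity in that factorization is genuinely the $H$-action being quotiented, and that the map sends $G^{c,c}$ into itself rather than escaping to a larger Bruhat stratum; one also has to confirm that the resulting map is Poisson, which in the finite case is a standard Semenov-Tian-Shansky argument that I expect to extend without modification once the appropriate ind-scheme model of the Kac-Moody big cell is fixed. Once these checks are in place, the remainder of the proof is a transcription of the finite-type argument with Williams2012 replacing Hoffmann2000 as the integrability input.
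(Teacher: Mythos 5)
Your step one (the map is conjugation by the $B_+$-factor, hence preserves all $\Ad$-invariant functions and descends to $G^{c,c}_{\Ad}/H_{\Ad}$) is sound and is essentially how the paper, too, disposes of conservation once the map is in hand. The genuine gap is in step two: the citation does not contain what you attribute to it. As the paper itself explains in the proof of \cref{prop:continuousintegrability}, the affine result of \cite{Williams2012} concerns $(G')^{c,c}/H$ for the \emph{centrally extended} loop group, whose generic symplectic leaves have dimension $2r+2$, not the space $G^{c,c}_{\Ad}/H_{\Ad}$ with $2r$-dimensional leaves considered here; and there are no ``characters of integrable representations'' of the affine Kac--Moody group available as regular invariant functions to serve as Hamiltonians. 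The conserved quantities must instead be built from the invariant ring $\BC[\mr{G}]^{\mr{G}}$ of the underlying simple group, pulled back along the evaluation map $L\mr{G}\times\BC^*\to\mr{G}$ and projected to the $\BC^*$-invariant component, with twisted affine types handled by embedding as the fixed locus of a diagram automorphism --- this is what ``characters of finite-dimensional representations'' means in the statement, and verifying that these functions form an integrable system on $G^{c,c}_{\Ad}/H_{\Ad}$ is precisely the content your step two outsources to a reference that proves a neighboring but different statement.

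A second gap: discrete Liouville integrability requires more than conservation of the Hamiltonians. The map must be Poisson, and --- since in affine type $C$ has corank one, so the symplectic leaves of $G^{c,c}_{\Ad}/H_{\Ad}\cong\CX_{\Si_C}$ have positive codimension --- it must preserve the distinguished leaf in order to restrict to a symplectomorphism carrying the integrable system. You defer the Poisson property to an unproven Semenov--Tian--Shansky extension and do not address leaf preservation at all. The paper sidesteps both points by first proving (\cref{prop:factorization}, via \cref{prop:mixedmoves} and \cref{prop:amalgcommutes}) that the factorization mapping coincides with the cluster automorphism $\wh{\mu}_\si$ of $\CX_{\Si_C}$: cluster transformations of $\CX$-coordinates are automatically Poisson and visibly preserve the distinguished leaf, and the same identification settles the items you flag as ``bookkeeping'' (generic existence and $H$-uniqueness of the Coxeter-shaped factorization, and the fact that the image does not escape $G^{c,c}$). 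So while your high-level template matches the paper's, as written the proposal leaves both the construction of the affine integrable system and the symplectic properties of the map unestablished.
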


In type $A_{n}^{(1)}$ a generalization of this is treated in \cite{Fock2012}, and is related to the Hirota bilinear difference equation (or octahedron recurrence).  In other simply-laced affine types it is related to the analogues of $Q$-systems for quantum toroidal algebras \cite{Hernandez}.  

Since amalgamation commutes with mutation in a suitable sense, our setup also gives rise to a distinguished sequence of cluster transformations on $G^{c,c}$ itself.  This turns out to be closely related to the Fomin-Zelevinsky twist automorphism, which relates the cluster variables and factorization parametrization associated with a double reduced word.  

\begin{thm*}\emph{(\ref{prop:twistasclustertrans})}
The twist automorphism of $G^{c,c}$ maps the toric chart associated with any seed to the chart obtained from the mutation sequence associated with the factorization mapping on $G^{(c,c)}/H$.  This holds when $G$ is any symmetrizable Kac-Moody group, and yields explicit formulas expressing twisted cluster variables as Laurent monomials in the untwisted cluster variables of a different cluster.  
\end{thm*}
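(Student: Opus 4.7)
The plan is to verify the equality by computing both sides in a single initial toric chart and then propagating the result via general cluster-algebraic arguments. I would fix the seed $S_0$ associated with a reduced expression for the double word $(c,c)$ and identify its cluster variables with the generalized minors of Berenstein-Fomin-Zelevinsky, so that the Fomin-Zelevinsky twist $\eta$ acts by formulas that can be written down directly from the Gaussian decomposition of $g \in G^{c,c}$. On the other side, by \ref{prop:coxconjquot} and \ref{prop:qsystemsasclusters}, combined with the compatibility of amalgamation with mutation, the factorization mapping on $G^{c,c}/H$ lifts uniquely to a distinguished mutation sequence $\mu$ on $G^{c,c}$ whose effect on the cluster variables of $S_0$ is determined by composed exchange relations.

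The heart of the proof is to show that $\eta$ and the transition from $\mathrm{chart}(S_0)$ to $\mathrm{chart}(\mu(S_0))$ coincide. Both maps should implement, geometrically, the same cyclic rotation of the factorization of $g$ along the word $(c,c)$: the factorization map is literally such a rotation, while the twist performs an equivalent rotation on the parametrization, up to an $H$-correction that is precisely tracked by the frozen variables appearing in the amalgamation. Matching the two reduces to a direct calculation comparing the Gaussian-decomposition formula for $\eta$ with the iterated exchange relations defining $\mu$; the Laurent-monomial conclusion then follows because a cyclic rotation of the $(c,c)$-factorization acts torically in the corresponding chart. The main obstacle is bookkeeping the $H$-torsor data that distinguishes $G^{c,c}$ from its quotient: on $G^{c,c}/H$ both dynamics are cleanly cyclic, but on $G^{c,c}$ one must carefully account for how the frozen variables, the Weyl-group representatives $\overline{c}$ entering the twist formula, and the simple-coroot factors introduced by the factorization mapping all interact.

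Once the base case is settled, the statement for an arbitrary seed $S$ follows by propagation: any two seeds are connected by a finite sequence of mutations, and both $\eta$ (as a biregular automorphism of the variety) and the $\mu$-dynamics commute with such sequences in the appropriate sense, so the monomial relation between $\mathrm{chart}(S_0)$ and $\mathrm{chart}(\mu(S_0))$ lifts to the relation between $\mathrm{chart}(S)$ and $\mathrm{chart}(\mu(S))$. The Kac-Moody case requires no new ideas, since the Coxeter double Bruhat cell, the twist automorphism, the factorization parametrization, and the generalized-minor calculus all extend to the symmetrizable Kac-Moody setting, and the key geometric identity remains a finite product of rank-one computations along the word $(c,c)$.
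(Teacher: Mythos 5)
The place where your plan is thin is exactly where the paper's content lies, and one of your two steps has a genuine gap. For the base case, you say the identity between the twist and the chart transition ``reduces to a direct calculation comparing the Gaussian-decomposition formula for $\eta$ with the iterated exchange relations,'' but you give no mechanism, and the twist is not literally a rotation of the $(c,c)$-factorization: it is defined through the decompositions of $\dol{u}g^\io$ and $g^\io\ol{v}$ in \cref{eq:twist}. The paper never compares the twist with composed exchange relations head-on. Instead it factors the comparison through the $\CX$-coordinates: \cref{prop:dblbruhatensemble} (quoted from earlier work) says the twist intertwines cluster variables with $\CX$-coordinates via the modified exchange matrix, $\pi_G^*X_i = \prod_j(\tau^*A_j)^{B^{\mathrm{mod}}_{ij}}$; and \cref{lem:untwistedeval} shows, by explicitly evaluating the generalized minors $\De^{\om_i}_{e,c^{-1}}$, $\De^{\om_i}_{e,e}$, $\De^{\om_i}_{c,e}$ on the parametrized group element, that these $\CX$-coordinates are Laurent monomials (the matrix $N$) in the untwisted cluster variables of the reversed word $\mb{i}'=(1,\dots,r,-1,\dots,-r)$, whose seed is precisely $\wh{\mu}(\Si_{\mb{i}})$ by \cref{prop:mixedmoves}. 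The exponent matrix $M=NB^{\mathrm{mod}}_{\Si}$, which involves $c$ and $c^{-1}$ acting on fundamental weights, is then extracted using the Coxeter identity of \cref{lem:coxetereq}. Your ``cyclic rotation'' heuristic explains why some toric relation is plausible, but it does not produce this identity or its exponents; some concrete computation of minors (or an equivalent) is unavoidable.

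The propagation step is where your argument actually fails as stated. You assert that the monomial relation at the initial seed lifts to every seed because the twist and the mutation dynamics ``commute with such sequences in the appropriate sense,'' but a Laurent-monomial identification between two clusters is not in general preserved by mutation. The paper needs \cref{lem:changeofcoefficients}: the matrix $M$ must satisfy $\wt{B}_{ij}=(BM)_{ij}$ on unfrozen rows and $M_{ij}=\de_{ij}$ on unfrozen columns, it transforms by the explicit rule \cref{eq:M'} under each mutation, and--crucially--these hypotheses remain valid after mutation only because the unfrozen-by-unfrozen block of the Coxeter word seed's exchange matrix vanishes (cluster type $A_1^r$; see \cref{lem:bmatrix}), together with the fact that $B_{\wh{\mu}(\Si)}=-B_\Si$ for this seed. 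Without verifying these conditions, or supplying a substitute argument, the passage from the chart of the initial word seed to the chart of an arbitrary seed does not follow from ``commuting with mutations.'' Your remark that the Kac-Moody case needs no new ideas is essentially right, granted the minimal/maximal adjoint form bookkeeping already set up in the paper.
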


Versions of the twist map exist on many varieties of Lie-theoretic origin with natural cluster structures.  This result parallels similar ones for unipotent cells \cite{Geiss2012} and Grassmannians \cite{Marsh2013}, which show that certain twisted cluster variables differ by a change of coefficients from the untwisted cluster variables obtained from a distinguished sequence of mutations.

Finally, we note that part of the interest in understanding properties of the exchange matrices $B_C$ comes from their appearance (in the simply-laced case) as BPS quivers of pure $\CN=2$ gauge theories \cite{Alim2011}.  The mutation sequences relevant for spectrum computations are in fact iterations of the mutation sequences we consider here.

\textsc{Acknowledgments}  I would like to generously thank Philippe Di Francesco, Rinat Kedem, Nicolai Reshetikhin, Tomoki Nakanishi, Vladimir Fock, and Michael Gekhtman for useful related discussions and remarks while this research was in progress.  This work was supported by NSF grants DMS-12011391 and 0943745, and the Centre for Quantum Geometry of Moduli Spaces at Aarhus University.

\section{Cluster Algebras and Double Bruhat Cells}



\subsection{Cluster Variables and $\CX$-coordinates} 

In this section we fix some basic definitions and facts concerning cluster algebras and $\CX$-coordinates.  More extensive references include \cite{Fomin2006,Fock2003,Gross2013}.  The only nonstandard item is our discussion of amalgamation: while this is usually understood as a gluing operation between seeds \cite{Fock2006}, we will require amalgamations of individual indecomposable seeds.

\begin{defn}
\textbf{(Seeds)} A seed $\Si$ consists of: 
\begin{enumerate}
\item An index set $I = I_f \sqcup I_u$ with a decomposition into frozen and unfrozen indices. 
\item An $I \times I$ exchange matrix $B$ with $B_{ij} \in \BZ$ unless $i, j \in I_f$.  
\item Skew-symmetrizers $d_i \in \BZ_{>0}$ such that $B_{ij}d_j = B_{ji}d_i$.  \qedhere
\end{enumerate}
\end{defn}

\begin{defn}
\textbf{(Mutation)} For any unfrozen index $k$ the mutation of $\Si$ at $k$ is the seed $\mu_k(\Si)$ defined as follows.  It has the same index set, frozen and unfrozen subsets, and skew-symmetrizers as $\Si$. Its exchange matrix $\mu_k(B)$ is given by
\begin{align}\label{eq:matmut}
\mu_k(B)_{ij} = \begin{cases}
-B_{ij} & i = k \text{ or } j=k \\
B_{ij} + \frac12(|B_{ik}|B_{kj} + B_{ik}|B_{kj}|) & i,j \neq k. \xqedhere{3.98cm}{\qedhere}
\end{cases}
\end{align}
\end{defn}

Two seeds $\Si$ and $\Si'$ are said to be mutation equivalent if they are related by a finite sequence of mutations.  Note that the term seed is often taken to include the additional data of an identification of the corresponding cluster variables with a transcendence basis of a fixed function field.  

\begin{defn}
\textbf{(Cluster Variables and $\CX$-coordinates)} To a seed $\Si$ we associate two Laurent polynomial rings $\BC[A_i^{\pm1}]$ and $\BC[X_i^{\pm1}]$, whose generators are indexed by $I$ and referred to as cluster variables and $\CX$-coordinates, respectively.  These are the coordinate rings of two algebraic tori, denoted by $\CA_\Si$ and $\CX_\Si$.  There is a canonical map $p_\Si: \CA_\Si \to \CX_\Si$ defined by $p_\Si^* X_i = \prod_{j \in I} A_j^{B_{ij}}$.  The torus $\CX_\Si$ has a canonical Poisson structure given by
\[
\{X_i,X_j\} = B_{ij}d_jX_iX_j.\qedhere
\]
\end{defn}

While working over the complex numbers is sufficient for our purposes, it is not essential.  Also, what we refer to as $\CX$-coordinates are often called $Y$-variables elsewhere in the literature.  

\begin{remark}\label{remark:lattices}
The ring $\BC[X_i^{\pm1}]$ should be identified with the group ring of the free abelian group $\BZ I$ generated by $I$, and (when $B$ is skew-symmetric) $\BC[A_i^{\pm1}]$ should be identified with the group ring of its dual lattice $(\BZ I)^*$.  In particular, the exchange matrix endows $\BZ I$ with a skew-symmetric form, which is the origin of the map $p_\Si$ and the Poisson structure on $\CX_\Si$.
\end{remark}

\begin{defn}
\textbf{(Cluster Transformations)} To each mutation $\mu_k$ of seeds is associated a pair of rational maps between the corresponding tori, called cluster transformations and also denoted by $\mu_k$.  These satisfy
\[
\begin{tikzcd}
\CA_{\Si} \arrow[dashed]{r}{\mu_k} \arrow{d}{p_{\Si}} & \CA_{\Si'} \arrow{d}{p_{\Si'}} \\
\CX_{\Si} \arrow[dashed]{r}{\mu_k} & \CX_{\Si'}
\end{tikzcd},
\]
where $\Si'=\mu_k(\Si)$, and are defined explicitly by\footnote{Note that our exchange matrix conventions are transpose to those of, for example, \cite{Fomin2006}.}
\begin{gather}\label{eq:Atrans}
\mu_k^*(A'_i) = \begin{cases}
A_i & i \neq k \\
\displaystyle A_k^{-1}\biggl(\prod_{B_{kj}>0}A_j^{B_{kj}} + \prod_{B_{kj}<0}A_j^{-B_{kj}}\biggr) & i = k
\end{cases}
\end{gather}
and
\begin{gather}\label{eq:Xtrans}
\mu_k^*(X'_i) = \begin{cases}
X_iX_k^{[B_{ik}]_+}(1+X_k)^{-B_{ik}} & i \neq k  \\
X_k^{-1} & i = k,
\end{cases}
\end{gather}
where $[B_{ik}]_+\coloneqq\mathrm{max}(0,B_{ik})$.
\end{defn}

The new cluster variables $A'_i$ could also be defined directly as elements of the function field $\BC(\CA_{\Si})$, omitting specific mention of the torus $\CA'_{\Si}$.

\begin{defn}
\textbf{(Cluster Algebras and $\CX$-varieties)} The $\CA$- and $\CX$-spaces $\CA_{|\Si|}$ and $\CX_{|\Si|}$ are the schemes obtained from gluing together along cluster transformations all such tori of seeds mutation equivalent to an initial seed $\Si$.  The map $p_\Si$ extends to a map $p_{|\Si|} \colon \CA_{|\Si|} \to \CX_{|\Si|}$, and the Poisson structure on $\CX_{\Si}$ extends to one on $\CX_{|\Si|}$.  The upper cluster algebra is the algebra of regular functions on $\CA_{|\Si|}$, or equivalently the subalgebra of $\BC(\CA_\Si)$ consisting of elements that are Laurent polynomials in the cluster variables of all seeds mutation equivalent to $\Si$.  It contains the cluster algebra, which is the subalgebra generated by the cluster variables of all such seeds.
\end{defn}

\begin{defn}
\textbf{($\si$-periods)} Let $\wh{\mu} = \mu_{i_1} \circ \cdots \circ \mu_{i_k}$ be a sequence of mutations of a seed $\Si$ and $\si$ a permutation of $I$ such that
\[
\wh{\mu}(B)_{ij} = B_{\si(i)\si(j)}.
\]
In other words, $\wh{\mu}(\Si)$ and $\Si$ are isomorphic after relabeling by $\si$.  Then we say $\wh{\mu}$ is a $\si$-period of $\Si$, or that $\wh{\mu}$ is a mutation-periodic sequence when $\si$ and $\Si$ are understood.  To such a mutation-periodic sequence is associated a pair of rational automorphisms of the tori $\CA_\Si$ and $\CX_\Si$, denoted by $\wh{\mu}_\si$, which we refer to as cluster automorphisms and which are intertwined by the map $p_\Si$.  More formally, these are defined by
\[
\wh{\mu}_\si^*(A_i) = (\mu_{i_1} \circ \cdots \circ \mu_{i_k})^*(A_{\si^{-1}(i)}), \quad \wh{\mu}_\si^*(X_i) = (\mu_{i_1} \circ \cdots \circ \mu_{i_k})^*(X_{\si^{-1}(i)}). \qedhere
\]
\end{defn}

\begin{defn}\label{def:amalg}
\textbf{(Amalgamation)} If $\Si$, $\wt{\Si}$ are seeds and $\pi \colon I \onto \wt{I}$ a surjection of their index sets, we say $\wt{\Si}$ is the amalgamation of $\Si$ along $\pi$ if
\begin{enumerate}
\item For all distinct $i, j \in I$, $\pi(i) = \pi(j)$ implies $i, j \in I_f$ and $B_{ij} = 0$.
\item For all $k, \ell \in \wt{I}$, 
\[
\wt{B}_{k \ell} = \sum_{\mathclap{\substack{i,j: \pi(i) = k,\\ \pi(j) = \ell}}} B_{ij}.
\]
\item $\pi(I_u) \subset \wt{I}_u$.
\item $d_i = d_{\pi(i)}$ for all $i \in I$.  
\end{enumerate}
To such an amalgamation of seeds is associated an amalgamation map $\pi \colon \CX_{\Si} \onto \CX_{\wt{\Si}}$, which is Poisson and defined by
\[
\pi^*(\wt{X}_j) = \prod_{\mathclap{i: \pi(i) = j}} X_i.\qedhere
\]
\end{defn}

In particular, an amalgamation $\wt{\Si}$ of $\Si$ can be associated with any bijection $\vphi: I_1 \isom I_2$ between disjoint subsets of $I_f$ such that $B_{i, \vphi(i)} = 0$ and $d_i = d_{\vphi(i)}$ for all $i \in I_1$.  We set $\wt{I} = I \sminus I_1$, $\wt{I}_u = I_u$, $\wt{I}_f = I_f \sminus I_1$, defining the map $\pi:I \onto \wt{I}$ as the identity on $I \sminus I_1$ and $\vphi$ on $I_1$.  The exchange matrix $\wt{B}$ is then uniquely determined by the hypotheses of \cref{def:amalg}.

\begin{remark}
In the spirit of \cref{remark:lattices}, amalgamation should be understood as deriving from an inclusion of lattices $\BZ\wt{I} \subset \BZ I$, where for each $i \in \wt{I}$ we identify the generator $e_i$ of $\BZ \wt{I}$ with the element $\sum_{\pi(j)=i}e_j$ of $\BZ I$.
\end{remark}

\cref{def:amalg} is somewhat flexible about the relation between frozen and unfrozen subsets of $I$ and $\wt{I}$, and in typical situations we may have $\pi(i)$ be unfrozen though $i$ is frozen.  It is also typically the case that $\Si$ is a direct sum of two other seeds $\Si_1$ and $\Si_2$ (for the obvious notion of direct sum), and the map $\vphi$ identifies some frozen indices of $\Si_1$ with frozen indices of $\Si_2$.  However, our examples require the more general notion given here.  A crucial feature of amalgamations is that under certain mild conditions they commute with cluster transformations.

\begin{prop}\label{prop:amalgcommutes}
Suppose $\wt{\Si}$ is the amalgamation of $\Si$ along $\pi: I \onto \wt{I}$, and that $\pi$ also satisfies the hypotheses of \cref{def:amalg} with respect to $\mu_k(\Si)$ and $\mu_k(\wt{\Si})$ for some unfrozen index $k$. Then $\mu_k(\wt{\Si})$ is also the amalgamation of $\mu_k(\Si)$ along $\pi$, and the respective amalgamation maps and cluster transformations commute:
\[
\begin{tikzcd}
\CX_{\Si} \arrow[dashed]{r}{\mu_k} \arrow[two heads]{d}{\pi} & \CX_{\Si'} \arrow[two heads]{d}{\pi} \\
\CX_{\wt{\Si}} \arrow[dashed]{r}{\mu_k} & \CX_{\wt{\Si}'}.
\end{tikzcd}
\]
\end{prop}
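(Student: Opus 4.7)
The plan is to reduce both assertions to a single sign-coherence property: within each fiber of $\pi$, the integers $\{B_{i'k}\}_{\pi(i')=\wt{i}}$ share a common sign (up to zeros), and likewise the $\{B_{kj'}\}_{\pi(j')=\wt{j}}$. Once this property is extracted from the hypotheses, both the matrix identity required by condition (2) of \cref{def:amalg} and the $\CX$-coordinate square become routine fiber-by-fiber bookkeeping.

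I would begin by noting that, since $k$ is unfrozen, condition (1) of \cref{def:amalg} applied to $\Si$ itself forces $\pi^{-1}(\pi(k)) = \{k\}$, so the symbol $k$ unambiguously labels the mutation vertex in both $I$ and $\wt{I}$. Next I would unpack the hypothesis that condition (1) persists for $\mu_k(\Si)$: for distinct frozen $i, j$ in a common fiber, $B_{ij} = 0$ and the assumed vanishing of $\mu_k(B)_{ij}$ reduce \eqref{eq:matmut} to $|B_{ik}|B_{kj} + B_{ik}|B_{kj}| = 0$. Together with skew-symmetrizability this yields the common-sign property within each fiber.

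With sign-coherence in hand, the identity
\[
\mu_k(\wt{B})_{\wt{i}\wt{j}} = \sum_{\substack{\pi(i')=\wt{i} \\ \pi(j')=\wt{j}}} \mu_k(B)_{i'j'}
\]
follows by direct expansion: coherence produces $|\wt{B}_{\wt{i}k}| = \sum_{\pi(i')=\wt{i}} |B_{i'k}|$, so the quadratic term in \eqref{eq:matmut} distributes correctly over pairs $(i', j')$ in the relevant fibers. Boundary cases ($\wt{i} = k$ or $\wt{j} = k$) follow from the singleton-fiber property and the sign-flip rule, while conditions (3) and (4) of \cref{def:amalg} transfer tautologically under mutation. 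Hence $\mu_k(\wt{\Si})$ is genuinely the amalgamation of $\mu_k(\Si)$ along $\pi$.

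For the commutativity of $\pi$ and $\mu_k$ on $\CX$-tori, I would chase a generator $\wt{X}'_{\wt{j}}$ (with $\wt{j} \neq k$) around the square using \eqref{eq:Xtrans}. Both compositions produce an expression of the form $\bigl(\prod_{\pi(i)=\wt{j}} X_i\bigr) \, X_k^{e}(1+X_k)^{-f}$: going down-then-right yields $(e, f) = ([\wt{B}_{\wt{j}k}]_+, \wt{B}_{\wt{j}k})$, while going right-then-down yields $(\sum_{\pi(i)=\wt{j}}[B_{ik}]_+, \sum_{\pi(i)=\wt{j}} B_{ik})$. Sign-coherence equates these pairs of exponents. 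The case $\wt{j} = k$ is handled separately using $\mu_k^* X'_k = X_k^{-1}$ and $\pi^*\wt{X}_k = X_k$. The only genuine obstacle is isolating and verifying the sign-coherence statement; once it is articulated, the remainder organizes itself as summation over fibers.
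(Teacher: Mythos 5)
Your proof is correct and takes essentially the same route as the paper: the crux in both is that opposite signs of $B_{jk}$ and $B_{\ell k}$ within a fiber of $\pi$ would force a nonzero entry $\mu_k(B)_{j\ell}$ between glued frozen indices, contradicting the hypothesis on $\pi$, and this fiberwise sign-coherence yields $\sum_{\pi(j)=i}[B_{jk}]_+=[\sum_{\pi(j)=i}B_{jk}]_+$, which is exactly what the $\CX$-coordinate comparison requires. The only difference is that you also verify explicitly the exchange-matrix identity making $\mu_k(\wt{\Si})$ the amalgamation of $\mu_k(\Si)$, a point the paper leaves implicit in its hypotheses.
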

\begin{proof}
For each $i \in \wt{I}$, we must check that $(\pi \circ \mu_k)^* X'_i = (\mu_k \circ \pi)^* X'_i$.  This is clear for $i=k$, while for $i \neq k$ we have
\begin{gather*}
(\pi \circ \mu_k)^* X'_i = \prod_{\mathclap{\pi(j) = i}}(X_jX_k^{[B_{jk}]_+}(1+X_k)^{-B_{jk}})\\
(\mu_k \circ \pi)^* X'_i = (\prod_{\mathclap{\pi(j) = i}}X_j)X_k^{[B_{ik}]_+}(1+X_k)^{-B_{ik}}.
\end{gather*}
Since $B_{ik} = \sum_{\pi(j) = i}B_{jk}$ by assumption, the result follows if
\[
\sum_{\mathclap{\pi(j) = i}}[B_{jk}]_+ = [\sum_{\mathclap{\pi(j) = i}}B_{jk}]_+.
\]
This in turn holds if $B_{jk}$ and $B_{\ell k}$ are of the same sign whenever $\pi(j) = \pi(k) = i$.  But if $B_{jk}$ and $B_{\ell k}$ were of opposite signs for some such $j$, $\ell$, $B'_{j \ell}$ would be nonzero, contradicting our hypothesis about $\pi$.
\end{proof}

When frozen variables of two distinct seeds are glued together by an amalgamation, the assumption that $\pi$ satisfies the needed hypotheses with respect to the mutated seeds always holds.  However, when $\Si$ is not a direct sum this need not be the case.  For example, if $B$ is the adjacency matrix of the quiver
\[
\begin{tikzpicture}
\coordinate [label=1] (1) at (0,0);
\coordinate [label=2] (2) at (2,0);
\coordinate [label=3] (3) at (4,0);
\fill (1) circle (.06);
\fill (2) circle (.06);
\fill (3) circle (.06);
\draw [-to,shorten <=1.3mm,shorten >=1.3mm] (1) to (2);
\draw [-to,shorten <=1.3mm,shorten >=1.3mm] (2) to (3);
\end{tikzpicture}
\]
then we can form an amalgamation by gluing the vertices 1 and 3 together.  However, after mutation at vertex 2, we will have $B'_{13} \neq 0$, hence this is no longer an admissible amalgamation.  

\subsection{Double Bruhat Cells}


We recall some needed facts about double Bruhat cells, following \cite{Fomin1999,Berenstein2005,Williams}.  Let $G$ be a simply-connected semisimple algebraic group, $C$ its Cartan matrix, $W$ its Weyl group, and $B_\pm$ a pair of opposite Borel subgroups.  For each pair $u,v \in W$, there is a double Bruhat cell $G^{u,v}\coloneqq B_+ \dot{u} B_+ \cap B_- \dot{v} B_-$, where $\dot{u}, \dot{v}$ are any representatives of $u$, $v$ in $G$.  Although the $G^{u,v}$ are not cells in the topological sense, they are smooth, rational affine varieties, and are Poisson subvarieties with respect to the standard Poisson-Lie structure on $G$.  

A double reduced word $\mb{i}=(i_1, \dots, i_m)$ for $(u,v)$ is a shuffle of a reduced word for $u$ written in the indices $\{-1,\dots,-r\}$ and a reduced word for $v$ written in the indices $\{1,\dots,r\}$.  For purposes of the following definition we set $i_k = k$ if $k <1$ or $k >m$.  For $k \leq m$ we define $k^+\coloneqq\mathrm{min}\{ 1 \leq \ell \leq m : \ell > k, |i_\ell| = |i_k| \}$, or $k^+ = m+1$ if the set of such $\ell$ is empty.  We let $\ep_k$ be equal to $1$ if $i_k>0$ and $-1$ if $i_k < 0$.

\begin{defn}\label{def:wordseed}
To a double reduced word $\mb{i}=(i_1, \dots, i_m)$ we associate a seed $\Si_{\mb{i}}$.  Its index set $I$ is $\{-r,\dots,-1\} \cup \{1,\dots,m\}$.  An index $k$ is frozen if either $k<0$ or $k^+ > m$.  The exchange matrix $B_{\mb{i}}$ is given by
\begin{align*}
B_{jk} = & \frac{C_{|i_k|,|i_j|}}{2}\biggl(\ep_j[j = k^+] - \ep_k[j^+=k]  + \ep_j[k<j<k^+][j>0] \\ &- \ep_{j^+}[k<j^+<k^+][j^+\leq m] - \ep_k[j<k<j^+][k>0] + \ep_{k^+}[j<k^+<j^+][k^+\leq m] \biggr).
\end{align*}
We let $d_k = d'_{|i_k|}$, where the $d'_i$ are chosen so that $d'_iC_{ij} = d'_jC_{ji}$.  
\end{defn}

The cluster variables and $\CX$-coordinates associated with these seeds have direct geometric realizations on double Bruhat cells.  We let $G_{\Ad}^{u,v}$ denote the double Bruhat cell of the adjoint group $G_{\Ad}$, and $H_{\Ad}$ its Cartan subgroup. Let $\{\om_i^\vee\} \subset \Hom(\BC^*,H_{\Ad})$ denote the fundamental coweights, defined by $\< \om_i^\vee|\al_j\> = \de_{ij}$.  For $t \in \BC^*$, let $t^{\om_i^\vee}$ denote the corresponding element of $H_{\Ad}$.  We fix a choice $\{e_i, f_i\}_{1 \leq i \leq r} \subset \fg$ of positive and negative Chevalley generators, and define simple root subgroups
\[
E_i(t) \coloneqq \exp(te_i), \quad F_i(t) \coloneqq \exp(tf_i).
\]
It will be convenient to set $E_{-i}(t) \coloneqq F_i(t)$, and to abbreviate $E_{\pm i} \coloneqq E_{\pm i}(1)$.  For $w \in W$ we define representatives $\ol{w} \in G$ by setting $\ol{s_i} \coloneqq E_i(-1)E_{-i}(1)E_i(-1)$ and $\ol{w} \coloneqq \ol{s_{i_1}} \cdots \ol{s_{i_n}}$ for any reduced word $s_{i_1} \cdots s_{i_n}$ for $w$. Likewise we define $\dol{w}$ by extension from $\dol{s_i} \coloneqq E_{-i}(-1)E_i(1)E_{-i}(-1)$.  

\begin{defn}\label{defn:Xdef}
For each double reduced word $\mb{i} = (i_1,\dots,i_m)$, let $x_{\mb{i}}: \CX_{\Si_{\mb{i}}} \into G_{\Ad}^{u,v}$ be the open immersion defined by
\begin{gather*}\label{eq:Xmap}
x_{\mb{i}}: (X_{-r},\dots,X_m) \mapsto X^{\om_{r}^\vee}_{-r}\cdots X^{\om_1^\vee}_{-1} E_{i_1} X^{\om_{|i_1|}^\vee}_1 \cdots E_{i_j} X^{\om_{|i_j|}^\vee}_j\cdots E_{i_m}X^{\om_{|i_m|}^\vee}_m.
\end{gather*}
This extends to a map $\CX_{|\Si_{\mb{i}}|} \to G_{\Ad}^{u,v}$ which is Poisson with respect to the canonical Poisson structure on $\CX_{|\Si_{\mb{i}}|}$ and the standard Poisson-Lie structure on $G_{\Ad}^{u,v}$.
\end{defn}

\begin{prop}\label{prop:mixedmoves}
\emph{(\cite{Fock2006})} Suppose that $\mb{i} = (i_1,\dots,i_m)$, $\mb{i}' = (i'_1,\dots,i'_m)$ differ by swapping two adjacent indices differing only by a sign.  That is, for some $1 \leq k < m$, $i_k = -i_{k+1}$, and
\[
i'_\ell = \begin{cases} -i_\ell & \ell = k, k+1 \\ i_\ell & \mathrm{otherwise}. \end{cases}
\]
Then the corresponding sets of $\CX$-coordinates on $G_{\Ad}^{u,v}$ differ by the cluster transformation at $k$:
\[
\begin{tikzcd}
\CX_{\Si_{\mb{i}}} \arrow[dashed]{rr}{\mu_k} \arrow{dr}[swap]{x_{\mb{i}}} && \CX_{\Si_{\mb{i}'}} \arrow{dl}{x_{\mb{i}'}} \\
& G^{u,v}_{\Ad} &
\end{tikzcd}
\]
\end{prop}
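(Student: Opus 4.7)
The plan is to verify the commutative diagram by a direct Lie-theoretic computation. Since $x_{\mb{i}}$ and $x_{\mb{i}'}$ coincide at all positions $\ell \nin \{k, k+1\}$, the proof reduces to rewriting the local factor $E_{i_k}\cdot X_k^{\om_i^\vee}\cdot E_{i_{k+1}}\cdot X_{k+1}^{\om_i^\vee}$ (where $i\coloneqq|i_k|$; for concreteness assume $i_k = i$, the other case being analogous) into the corresponding factor of $x_{\mb{i}'}$ modulo torus elements which are then absorbed into neighboring $X_\ell^{\om_{|i_\ell|}^\vee}$ factors, and verifying that the resulting coordinates $X_\ell'$ obey the mutation formula~\eqref{eq:Xtrans} at $k$.

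The essential inputs are the fundamental $SL_2$ identity
\[
E_j(a)\,E_{-j}(b) \;=\; E_{-j}\!\bigl(\tfrac{b}{1+ab}\bigr)\cdot (1+ab)^{\alpha_j^\vee}\cdot E_j\!\bigl(\tfrac{a}{1+ab}\bigr)
\]
and the commutation rule $h^{\om_j^\vee}\,E_{\pm i}(s) = E_{\pm i}(h^{\pm\delta_{ij}}s)\,h^{\om_j^\vee}$. Commuting $X_k^{\om_i^\vee}$ past $E_{-i}$ rewrites the local factor as $E_i(1)\,E_{-i}(X_k^{-1})\cdot(X_k X_{k+1})^{\om_i^\vee}$; applying the $SL_2$ identity with $a=1$, $b=X_k^{-1}$ and restoring the new $E_{\pm i}$ to unit argument via further torus conjugation produces, using $\alpha_i^\vee - 2\om_i^\vee = \sum_{j\neq i} C_{ij}\om_j^\vee$ in $G_{\Ad}$, the expression
\[
(1+X_k)^{\om_i^\vee}\cdot E_{-i}(1)\cdot X_k^{-\om_i^\vee}\!\prod_{j\neq i}\!\bigl(\tfrac{1+X_k}{X_k}\bigr)^{\!C_{ij}\om_j^\vee}\!\! \cdot E_i(1)\cdot\bigl(X_{k+1}(1+X_k)\bigr)^{\!\om_i^\vee}.
\]
The $\om_i^\vee$-factors directly adjacent to the new Chevalley generators give $X_k'=X_k^{-1}$ and $X_{k+1}'=X_{k+1}(1+X_k)$; since $k^+=k+1$ one reads off from Definition~\ref{def:wordseed} that $B_{k+1,k}=-1$, so these match~\eqref{eq:Xtrans}.

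It remains to propagate the leftover torus factors $(1+X_k)^{\om_i^\vee}$ at the front and $\prod_{j\neq i}((1+X_k)/X_k)^{C_{ij}\om_j^\vee}$ in the middle outward into the neighboring coordinates. Both commute with $E_{\pm i}$, so the middle factor can be pushed leftward past $E_{-i}(1)$ to join the prefactor; each component of the resulting combined torus element, along some $\om_{j'}^\vee$, then slides leftward through successive blocks $E_{i_\ell}(1)\, X_\ell^{\om_{|i_\ell|}^\vee}$ unchanged whenever $|i_\ell|\neq j'$ and, upon reaching the first position $\ell$ to the left with $|i_\ell|=j'$, merges multiplicatively into $X_\ell$. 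The main obstacle is the resulting case analysis: one must check that the exponents absorbed at each such $\ell$ reproduce $X_k^{[B_{\ell k}]_+}(1+X_k)^{-B_{\ell k}}$ as prescribed by~\eqref{eq:Xtrans}, and that the indicator-function terms in Definition~\ref{def:wordseed} correspond bijectively to the adjacency configurations of nearest left-neighbors of each index around $k$ and $k^+=k+1$, with the signs $\ep_\ell, \ep_{\ell^+}$ tracking on which side of the relevant Chevalley generator the absorption takes place. This bookkeeping finishes the proof.
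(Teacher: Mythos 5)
The paper gives no proof of this proposition --- it is quoted from \cite{Fock2006} --- so there is nothing internal to compare against; judged on its own, your argument is a correct reconstruction of the standard one, and the deferred bookkeeping does close. Your local computation is right, and the matching works out as follows (for $i_k=i>0$, $i_{k+1}=-i$): from \cref{def:wordseed} the only nonzero entries $B_{\ell k}$ with $\ell\neq k$ are $B_{k+1,k}=-1$, $B_{\ell k}=-1$ for the previous occurrence $\ell$ of the letter $i$ (including the frozen index $\ell=-i$ when $k$ is the first occurrence), and $B_{\ell k}=-C_{i,|i_\ell|}$ for the nearest occurrence $\ell$ of each other letter to the left of $k$ with $\ell^+>k$; in particular every affected index lies weakly to the left of $k+1$, which is exactly why pushing the leftover torus factors leftward (and only leftward) terminates, with the $\om_i^\vee$-component depositing $(1+X_k)$ and the $\om_{j}^\vee$-components depositing $\bigl((1+X_k)/X_k\bigr)^{C_{ij}}$ at precisely those positions, in agreement with \eqref{eq:Xtrans}; coordinates to the right of $k+1$ are untouched on both sides. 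Two small points: your sentence ``both commute with $E_{\pm i}$'' is inaccurate for the prefactor $(1+X_k)^{\om_i^\vee}$, which does not commute with $E_{\pm i}$ --- harmless, since it never has to cross a letter-$i$ generator before merging into an $X$-coordinate, but only the middle factor's commutation should be invoked when crossing $E_{-i}(1)$. Also, for the diagram to read literally as the cluster transformation $\mu_k\colon \CX_{\Si_{\mb{i}}}\dashrightarrow\CX_{\Si_{\mb{i}'}}$ one should additionally record that $\mu_k(B_{\mb{i}})=B_{\mb{i}'}$, i.e.\ that the seed attached to $\mb{i}'$ by \cref{def:wordseed} is the mutated seed; this is another routine check from the same formula, but your write-up only verifies the coordinate transformation, not the identification of the target seed.
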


To each fundamental weight $\om_i$ is associated an irreducible $G$-representation $L(\om_i)$ with highest-weight vector $v_i$.  The principal generalized minor $\De^{\om_i} \in \BC[G]$ is defined by $\De^{\om_i}(g) = \< v_i | g v_i \>$.  The right-hand side is a matrix coefficient of $L(\om_i)$, and is characterized by the highest-weight component of $gv_i$ being equal to $\< v_i | g v_i \>v_i$.  For any $w,w' \in W$ we define the generalized minor $\De^{\om_i}_{w,w'}$ by $\De^{\om_i}_{w,w'}(g) = \De^{\om_i}(\ol{w}^{-1}g\ol{w'})$, or equivalently $\De^{\om_i}_{w,w'}(g) = \< \ol{w} v_{i} | g \ol{w'} v_{i} \>$.  Given an index $1 \leq k \leq m$ and a double reduced word $\mb{i}$, we define two Weyl group elements
\[
u_{\leq k} \coloneqq s^{\frac12(1-\ep_1)}_{i_1} \cdots s^{\frac12(1-\ep_{k})}_{i_{k}}, \quad v_{>k} \coloneqq  s^{\frac12(\ep_n+1)}_{i_n} \cdots s^{\frac12(\ep_{(k+1)}+1)}_{i_{k+1}},
\]
additionally setting $u_{\leq k} = e$, $v_{>k} = v^{-1}$ for $k < 0$.  

\begin{defn}\label{defn:Aminors}
For each double reduced word $\mb{i} = (i_1,\dots,i_m)$, let $a_{\mb{i}}: \CA_{\Si_{\mb{i}}} \to G^{u,v}$ be the open immersion defined by identifying the generalized minor $\De^{\om_{|i_k|}}_{u_{\leq k},v_{>k}}$ with the cluster variable $A_k$ for $k \in I$.  This extends to a map $\CA_{|\Si_{\mb{i}}|} \to G^{u,v}$ inducing an isomorphism of $\BC[G^{u,v}]$ with the upper cluster algebra $\BC[\CA_{|\Si_{\mb{i}}|}]$.
\end{defn}

The relationship between the geometric realizations of the cluster variables and $\CX$-coordinates associated with $\Si_{\mb{i}}$ is expressed in terms of a certain twist automorphism $\tau$ of $G^{u,v}$.  First denote by $g \mapsto g^\th$ the involution of $G$ determined by $a^\th=a^{-1}$ for $a \in H$, $E_i(t)^\th = E_{-i}(t)$ for $1 \leq i \leq r$, and $g \mapsto g^\io$ the anti-involution determined by $a^\io = a^{-1}$ for $a \in H$, $E_i(t)^\io = E_i(t)$ for $1 \leq i \leq r$.  For generic $g \in G$, we denote by $[g]_\pm$, $[g]_0$ the unique elements of $N_\pm$, $H$ such that  $g = [g]_-[g]_0[g]_+$.  The twist map $\tau$ is then defined by\footnote{Note that the twist map of \cite{Fomin1999} is an isomorphism $G^{u,v} \isom G^{u^{-1},v^{-1}}$ differing from the map here by the antiinvolution $\io$.  However, this merely reflects the fact that the cluster variables introduced in \cite{Berenstein2005} differ from the corresponding functions in \cite{Fomin1999} by $\io$.}
\begin{equation}\label{eq:twist}
\tau(g) \coloneqq \left([\dol{u}g^\io]_-^{-1}\dol{u}g^\io\ol{v}[g^\io\ol{v}]_+^{-1}\right)^\th.
\end{equation}

For each double reduced word $\mb{i}$ we define a modified exchange matrix $B^{\mathrm{mod}} \coloneqq B + M$, where $M$ is the $I \times I$ matrix with entries
\[
M_{jk} = \frac12 C_{|i_k|,|i_j|}\biggl( [j^+,k^+>m] + [j,k <0] \biggr).
\]
The matrix $B^{\mathrm{mod}}$ has integer entries, and the map $p^{\mathrm{mod}}_{\Si_{\mb{i}}}: \CA_{\Si_{\mb{i}}} \to \CX_{\Si_{\mb{i}}}$ defined by $(p^{\mathrm{mod}}_{\Si_{\mb{i}}})^*X_i = \prod_{j \in I}A_j^{B^{\mathrm{mod}}_{ij}}$ extends to a regular map $\CA_{|\Si_{\mb{i}}|} \to \CX_{|\Si_{\mb{i}}|}$.

\begin{prop}\label{prop:dblbruhatensemble}
\emph{(\cite{Williams})} Let $\mb{i}$ be a double reduced word for $u,v \in W$, and $p_G: G^{u,v} \to G_{\Ad}^{u,v}$ the composition of $\tau^{-1}$ and the quotient map $\pi_G$ from $G^{u,v}$ to $G_{\Ad}^{u,v}$.  Then $p_G$ relates the cluster variables and $\CX$-coordinates associated with $\mb{i}$ via a monomial transformation:
\[
\begin{tikzcd}
\CA_{|\Si_{\mb{i}}|} \arrow{r}{a_{|\Si_{\mb{i}}|}} \arrow{d}{p^{\mathrm{mod}}_{\Si_{\mb{i}}}} & G^{u,v} \arrow{d}{p_G} \\
\CX_{|\Si_{\mb{i}}|} \arrow{r}{x_{|\Si_{\mb{i}}|}} & G_{\Ad}^{u,v}.
\end{tikzcd}
\]
In particular, we have the following formula for $\CX$-coordinates in terms of twisted cluster variables:
\[
\pi_G^*X_i = \prod_{j \in I}(\tau^*\!A_j)^{B^{\mathrm{mod}}_{ij}}.
\]
\end{prop}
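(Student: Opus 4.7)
The statement asserts equality of two rational maps $\CA_{|\Si_{\mb i}|} \to G^{u,v}_{\Ad}$, so by restricting to the dense initial torus $\CA_{\Si_{\mb i}}$ it suffices to verify the scalar identity displayed in the proposition. Concretely, for a generic $\wt g \in G^{u,v}$ with image $\pi_G(\wt g) \in G^{u,v}_{\Ad}$, I need to show that the $\CX$-coordinates $X_i$ of $\pi_G(\wt g)$ read off from the chart $x_{\mb i}$ are given by the monomials $\prod_{j \in I} \bigl(\De^{\om_{|i_j|}}_{u_{\leq j}, v_{>j}}(\tau(\wt g))\bigr)^{B^{\mathrm{mod}}_{ij}}$ in the generalized minors evaluated at the twist $\tau(\wt g)$.

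My strategy is to reduce to a convenient choice of $\mb i$ using \cref{prop:mixedmoves} and then compute directly. Under a sign swap at an unfrozen index $k$, the $\CX$-coordinates transform by $\mu_k$, while on the $\CA$-side the cluster variables indexed by $k$ change by the corresponding exchange relation; one checks that $B^{\mathrm{mod}} = B + M$ undergoes the matrix mutation rule at $k$, since the correction $M$ only sees the frozen boundary data encoded by $[j^+, k^+ > m]$ and $[j, k < 0]$, which is stable under sign swaps in the interior of the word. It therefore suffices to verify the formula for a single $\mb{i}$ in each equivalence class, which I would take to have all negative indices preceding all positive ones. For such $\mb i$ the lift $\wt g$ factors cleanly as a $\dol{u}$-conjugate Cartan element times a sequence of positive root subgroup factors $E_{i_\ell} X_\ell^{\om_{|i_\ell|}^\vee}$ realizing $\ol{v}$; applying \eqref{eq:twist} to this factorization reduces the calculation of $\tau(\wt g)$ to an explicit Gauss decomposition in a split Borel parametrization, after which each minor $\De^{\om_{|i_j|}}_{u_{\leq j}, v_{>j}}(\tau(\wt g))$ can be evaluated by pairing matrix coefficients against the extremal weight vectors $\ol{u_{\leq j}} v_{|i_j|}$ and $\ol{v_{>j}} v_{|i_j|}$.

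The main obstacle is matching the exponents arising in the resulting monomial expressions with the explicit form of $B^{\mathrm{mod}}$. Each of the six Iverson-bracket terms in the formula for $B_{jk}$ of \cref{def:wordseed} corresponds to a distinct commutation event during the minor computation, tracking how a Cartan factor $X_k^{\om_{|i_k|}^\vee}$ migrates past the simple root subgroup indexed by $j$ or $j^+$ depending on the relative positions of $k, k^+, j, j^+$, and contributes an entry $C_{|i_k|, |i_j|}$ with a sign governed by one of $\ep_j, \ep_k, \ep_{j^+}, \ep_{k^+}$. The correction matrix $M$ arises from boundary factors $X_k^{\om_{|i_k|}^\vee}$ with no right partner in the word ($k < 0$ or $k^+ > m$), whose would-be virtual contributions are precisely what is needed to absorb the $H$-ambiguity inherent in lifting from $G^{u,v}_{\Ad}$ to $G^{u,v}$ and to render the final formula well-defined. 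Organizing this case analysis uniformly over all positions $j, k$ is where most of the combinatorial work resides.
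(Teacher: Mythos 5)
First, note that the paper does not actually prove \cref{prop:dblbruhatensemble}: it is imported wholesale from \cite{Williams}, so there is no internal argument to compare yours against, and your attempt has to stand on its own as a proof. Judged that way, its outer layer is sound but its core is missing. The reduction step is essentially workable: equality of the two regular maps can indeed be checked on the dense initial torus, and transporting the identity across the moves of \cref{prop:mixedmoves} is legitimate, provided you supply (i) the $\CA$-side analogue of \cref{prop:mixedmoves} (that the generalized minors attached to $\mb{i}$ and $\mb{i}'$ are related by the exchange relation at $k$, which is \cite{Berenstein2005} in finite type and \cite{Williams} in general), and (ii) the observation that $M$ is \emph{literally unchanged} under such a move and vanishes on the row and column of the mutated (necessarily unfrozen) index; it is this vanishing, not a general principle, that makes $\mu_k(B^{\mathrm{mod}})=\mu_k(B)+M$ and lets the frozen-monomial correction pass through the intertwining property of $p_{\Si}$. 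You assert the conclusion of (ii) but not the reason, and you omit (i) entirely; both are fixable, but they are doing real work.

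The genuine gap is the direct computation for the separated word, which is exactly where the content of the proposition lives and which you only describe rather than carry out. The statement that each of the six Iverson-bracket terms of \cref{def:wordseed} ``corresponds to a distinct commutation event'' and that $M$ ``absorbs the $H$-ambiguity'' is an expectation about how the bookkeeping should come out, not an argument; nothing in the proposal actually computes $\tau(g)$ from \cref{eq:twist}, evaluates the minors $\De^{\om_{|i_j|}}_{u_{\leq j},v_{>j}}$ on the result, or matches the exponents with $B^{\mathrm{mod}}$. What is being asserted here is, in substance, the Fomin--Zelevinsky twist/factorization theorem (\cite{Fomin1999}) and its Kac--Moody extension: the factorization parameters of $g$ are monomials in the twisted minors, and the $\CX$-coordinates of $\pi_G(g)$ are monomials in the factorization parameters. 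Either cite and assemble those two inputs, or genuinely perform the Gauss-decomposition induction along the word (in the style of the proof of \cref{lem:untwistedeval}), keeping careful track of weights; as written, the hardest step is replaced by a promissory note. A smaller but real inaccuracy: for the word with all negative indices first, the element is a Cartan factor times an interleaved product of $F_i$'s and coweight factors times an interleaved product of $E_i$'s and coweight factors; it is not ``a $\dol{u}$-conjugate Cartan element times positive root subgroup factors,'' and starting the computation from that misdescription would derail the minor evaluation.
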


With some additional notation, this discussion can be extended to include symmetrizable Kac-Moody groups \cite{Williams}.  For a symmetrizable Cartan matrix $C$, let $\wt{r} \coloneqq r + \corank C$ be the dimension of the Cartan subgroup of the corresponding ind-group $G$ \cite{Kumar2002}.  Fix an extension of the coroots $\{\al_i\}_{i=1}^r$ to a basis $\{\al_i\}_{i=1}^{\wt{r}}$ of $\Hom(\BC^*,H)$, inducing a fundamental weight basis $\{\om_i\}_{i=1}^{\wt{r}}$ of $\Hom(H,\BC^*)$.  For $r < i \leq \wt{r}$, define $\al_i \coloneqq d'\sum_{j=1}^r(d'_j)^{-1}\<\al_i^\vee|\al_j\>\om_j$ (where $d'$ is the least common integer multiple of the $d'_j$), inducing a fundamental coweight basis $\{\om_i^\vee\}_{i=1}^{\wt{r}}$ of $\Hom(\BC^*,H) \ot \BQ$.  These choices fix an extension of the Cartan matrix to a nondegenerate, symmetrizable $\wt{r}$-by-$\wt{r}$ matrix, as $C_{ij} = \<\al_i^\vee|\al_j\>$ is now defined for $r < i,j \leq \wt{r}$.  

As $\oplus_{1\leq i \leq \wt{r}}\BZ \al_i$ is a full rank sublattice of $\Hom(H,\BC^*)$, its kernel is a discrete subgroup of $H \cap Z(G)$.  The maximal adjoint form $G_{\Ad}^{\max}$ is the quotient of $G$ by this subgroup (so $\{\om_i\}_{i=1}^{\wt{r}}$ is a basis of its Cartan subgroup's cocharacter lattice), and the minimal adjoint form $G_{\Ad}^{\min}$ is the quotient of $G$ by $Z(G)$ (so $\{\om_i\}_{i=1}^{r}$ is a basis of its Cartan subgroup's cocharacter lattice).  

If $\mb{i}$ is a double reduced word for $u,v$, we have minimal and maximal seeds $\Si_{\mb{i}}^{\min}$, $\Si_{\mb{i}}^{\max}$ with respective index sets
\[
I_{\min} \coloneqq \{-r,\dots,-1\} \cup \{1,\dots,m\}, \quad I_{\max} \coloneqq \{-\wt{r},\dots,-(r+1)\} \cup I_{\min},
\]
and exchange matrices as in \cref{def:wordseed}.  \cref{defn:Xdef} now yields charts $\CX_{\Si^{\min}_{\mb{i}}} \into (G^{\min}_{\Ad})^{u,v}$ and $\CX_{\Si^{\max}_{\mb{i}}} \into (G^{\max}_{\Ad})^{u,v}$, while \cref{defn:Aminors} yields charts $\CA_{\Si^{\min}_{\mb{i}}} \into (G')^{u,v}$ and $\CA_{\Si^{\max}_{\mb{i}}} \into G^{u,v}$ (where $G'$ is the derived subgroup of $G$).  \cref{prop:dblbruhatensemble} now holds in the sense of the following diagram:
\[
\begin{tikzcd}
\CA_{\Si^{\min}_{\mb{i}}} \arrow[hook]{r} \arrow[hook]{d} & \CA_{\Si^{\max}_{\mb{i}}} \arrow[two heads]{r}{p_{\Si_{\mb{i}}^{\max}}^{\mathrm{mod}}} \arrow[hook]{d} & \CX_{\Si^{\max}_{\mb{i}}} \arrow[two heads]{r} \arrow[hook]{d} & \CX_{\Si^{\min}_{\mb{i}}} \arrow[hook]{d} \\
(G')^{u,v} \arrow[hook]{r} & G^{u,v} \arrow[two heads]{r}{p_G} & (G^{\max}_{\Ad})^{u,v} \arrow[two heads]{r} & (G^{\min}_{\Ad})^{u,v}.
\end{tikzcd}
\]
Here the top left and top right maps are induced by the inclusion of lattices $\BZ I_{\min} \into \BZ I_{\max}$ following \cref{remark:lattices}.

\begin{example}
If $C$ is of untwisted affine type, $G'$ is a central extension of the group $L\mr{G}$ of regular maps from $\BC^*$ to a simple Lie group $\mathring{G}$, and $G$ is the semidirect product $G' \rtimes \BC^*$.  $G_{\Ad}^{\max}$ is the quotient of $G$ by $Z(\mr{G})$, embedded as constant maps, while $G_{\Ad}^{\min}$ is the semidirect product$(L\mr{G}/Z(\mr{G})) \rtimes \BC^*$ .
\end{example}

\section{Factorization Dynamics}

In this section we discuss factorization mappings from the perspective of cluster transformations.  To any Cartan matrix $C$ we associate a seed $\Si_C$ with a canonical mutation-periodic sequence.  We realize this seed as an amalgamation of a Coxeter double Bruhat cell, which corresponds to taking its quotient under conjugation by the Cartan subgroup.  We show that the mutation-periodic sequence corresponds to a factorization mapping on this quotient.  In finite type this mapping is known to be discrete integrable \cite{Hoffmann2000}, and we show it is also integrable in affine type.  

\begin{defn}\label{defn:Cseed}
For any symmetrizable $r$-by-$r$ Cartan matrix $C$, let $\Si_C$ be the seed with $I_C = (I_C)_u = \{1,\dots,2r\}$, exchange matrix
\[
B_C \coloneqq \begin{pmatrix} 0 & C^t \\ -C^t & 0 \end{pmatrix},
\]
and $d_i$ derived from the symmetrizers of $C$ in the obvious way.  We let $\wh{\mu}$ be the mutation sequence $\mu_1 \circ \cdots \circ \mu_r$ of $\Si_C$, and $\si$ the permutation of $I$ interchanging $i$ and $i+r$. 
\end{defn}

\begin{prop}
The mutation sequence $\wh{\mu}$ is a $\si$-period of $\Si_C$, that is
\[
\wh{\mu}(B_C)_{ij} = (B_C)_{\si(i)\si(j)}.\qedhere
\]
\end{prop}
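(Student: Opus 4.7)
The plan is to compute $\wh{\mu}(B_C)$ directly in block form and show it equals
\[
\begin{pmatrix} 0 & -C^t \\ C^t & 0 \end{pmatrix},
\]
which is precisely the matrix $((B_C)_{\si(i)\si(j)})$ obtained by simultaneously swapping the two halves of indices.

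A preliminary observation is that since the upper-left block of $B_C$ vanishes, the vertices $1, \ldots, r$ are pairwise disconnected in the associated quiver, so the mutations $\mu_1, \ldots, \mu_r$ commute with one another (a direct consequence of the mutation formula applied when $B_{k\ell} = 0$). I would therefore apply them sequentially as $\mu_1, \mu_2, \ldots, \mu_r$ and track the four $r \times r$ blocks of the exchange matrix separately.

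The key bookkeeping fact is that throughout the process, every entry $B_{k\ell}$ with both $k, \ell \leq r$ remains zero: the sign-flip rule sends $0$ to $-0 = 0$, while the correction term $\frac12(|B_{ik}|B_{kj} + B_{ik}|B_{kj}|)$ vanishes whenever $B_{ik}$ or $B_{kj}$ does. Consequently, for the top-right entries $B_{kj}$ with $k \leq r, j > r$, the mutation $\mu_\ell$ with $\ell \leq r$ negates row $k$ when $\ell = k$ and leaves it alone otherwise (the correction vanishes since $B_{k\ell} = 0$); a symmetric statement holds for bottom-left entries. After all $r$ mutations, the upper-right block becomes $-C^t$ and the lower-left block becomes $C^t$, as required. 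For the same reason, the entries $B_{ik}$ and $B_{kj}$ (with $i, j > r$ and $k \leq r$) that feed into the correction term for the lower-right block are unchanged by any mutation at an index $\neq k$, and equal their original values $(B_C)_{ik} = -C_{k, i-r}$ and $(B_C)_{kj} = C_{j-r, k}$.

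It then remains only to verify that the lower-right block remains zero. A short sign analysis using $C_{kk} = 2 > 0$ and $C_{k\ell} \leq 0$ for $k \neq \ell$ shows that $\frac12(|B_{ik}|B_{kj} + B_{ik}|B_{kj}|)$, with $B_{ik}, B_{kj}$ as above and $i \neq j$ in $\{r+1, \ldots, 2r\}$, vanishes unless $k = i-r$ or $k = j-r$. For $k = i-r$ the contribution to $B_{ij}$ evaluates to $2 C_{j-r,i-r}$, and for $k = j-r$ to $-2 C_{j-r,i-r}$, so the two nonzero contributions cancel. The main obstacle is precisely this final sign cancellation; everything else reduces to direct verification from the mutation rule, streamlined by the block structure of $B_C$.
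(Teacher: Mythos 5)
Your proposal is correct and follows essentially the same route as the paper: a direct block computation showing $\wh{\mu}(B_C) = -B_C$, with the off-diagonal and upper-left blocks handled by the vanishing of the top-left block, and the crux being that in the lower-right block only the mutations at $k = i-r$ and $k = j-r$ contribute, with contributions $\pm 2C_{j-r,i-r}$ that cancel — exactly the cancellation the paper exhibits in its sum over $k$. Your write-up just makes explicit the bookkeeping (that the relevant entries retain their original values throughout the sequence) which the paper leaves implicit.
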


\begin{proof}
Since $(B_C)_{\si(i)\si(j)} = -(B_C)_{ij}$, we must check that $\wh{\mu}(B_C) = -B_C$.  This is immediate for the top-left and off-diagonal $r$-by-$r$ blocks of $\wh{\mu}(B_C)$.  We then calculate that
\begin{align*}
\wh{\mu}(B_C)_{i+r,j+r} = & \frac12 \sum_{\mathclap{1\leq k \leq r}} ( C_{k,i}|C_{j,k}| - |C_{k,i}|C_{j,k} ) \\
= & \frac12 \sum_{\mathclap{k = i,j}} ( C_{k,i}|C_{j,k}| - |C_{k,i}|C_{j,k}  ) \\
= & 0.\qedhere
\end{align*}
\end{proof}

Fix a Coxeter element $c = s_1\cdots s_r$ in the Weyl group associated with $C$, and a double reduced word $\mb{i} = (-1,\dots,-r,1,\dots,r)$ for $u = v =c$.  In fact the essential content of this section and the next hold when $u$ and $v$ are possibly distinct Coxeter elements, see \cref{remark:quot}.  When $C$ is not of finite type, $G_{\Ad}$ will refer to the minimal form of the adjoint group associated with $C$, and $\Si_{\mb{i}}$ to the corresponding minimal seed.  Note that $I_{\mb{i}} = \{-1,\dots,-r\} \cup I_C$.

\begin{lemma}\label{lem:bmatrix}
Let $C^t_U$, $C^t_L$ be the upper- and lower-triangular $r \times r$ matrices with 1's on the diagonal such that $C^t_U + C^t_L = C^t$.  That is,
\[
(C^t_U)_{ij} = \de_{ij} + [i<j]C_{ji}, \quad (C^t_L)_{ij} = \de_{ij} + [i>j]C_{ji}.
\]
Then the exchange matrix of $\Si_{\mb{i}}$ has the form
\[
\renewcommand*{\arraystretch}{1.5}
B_{\Si_{\mb{i}}} = \begin{pmatrix}C^t_U - \frac12 C^t & C^t_L & 0 \\ -C^t_U & 0 & -C^t_L \\ 0 & C^t_U & C^t_L - \frac12 C^t\end{pmatrix},
\]
where we have ordered the indices as $-1,\dots,-r,1,\dots,2r$.
\end{lemma}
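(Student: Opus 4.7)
The proof will be a direct calculation from \cref{def:wordseed}, organized to exploit the block structure of the word $\mb{i} = (-1,\dots,-r,1,\dots,r)$. First I partition the index set as $A = \{-1,\dots,-r\}$, $B = \{1,\dots,r\}$, $C = \{r+1,\dots,2r\}$, which matches the $3\times 3$ block shape of the claimed matrix, and tabulate the combinatorial data entering the formula: for $k = -a \in A$ we have $i_k = -a$, $\ep_k = -1$, $k^+ = a \in B$; for $k = b \in B$ we have $i_k = -b$, $\ep_k = -1$, $k^+ = b+r \in C$; and for $k = b+r \in C$ we have $i_k = b$, $\ep_k = +1$, $k^+ = 2r+1 > m$. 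In particular, every index other than those in $C$ satisfies $k^+ \leq m$, and the pairs $(k, k^+)$ always sit exactly $r$ apart (with the understanding that positions $-a$ and $a$ are $a$ apart in the ordering). This makes the six indicator expressions in \cref{def:wordseed} easy to evaluate in each case.

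Next I evaluate $B_{jk}$ for each of the nine block pairs $(j,k)$. The blocks $AC$ and $CA$ vanish by inspection: for $j \in A$ and $k \in C$ none of the six indicators $[j = k^+]$, $[j^+ = k]$, $[k<j<k^+]$, $[k<j^+<k^+]$, $[j<k<j^+]$, $[j<k^+<j^+]$ can fire, because $A$ and $C$ are separated by the entire block $B$ in the linear order while the "successor" relation jumps by only $r$. For the $BB$ block, after using that $\ep_j = \ep_k = -1$ and $j^+ - j = k^+ - k = r$, the four crossing indicators contribute $[b<a]$, $[a<b]$, $[a<b]$, $[b<a]$ with alternating signs, so they cancel pairwise and the block is $0$.

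For the four mixed blocks $AB$, $BA$, $BC$, $CB$, exactly one of the first two indicators can fire (detecting the pair $(k,k^+)$ when $j = k^+$ or $j^+ = k$), while the crossing indicators combine to give an Iverson-bracket pattern $2[a<b]$ or $2[a>b]$ depending on which triangularity is relevant. Comparing with the identities $(C^t_U)_{ab} = \de_{ab} + [a<b]C_{ba}$ and $(C^t_L)_{ab} = \de_{ab} + [a>b]C_{ba}$ yields precisely $\pm C^t_U$, $\pm C^t_L$ as stated. For the diagonal blocks $AA$ and $CC$, because one "side" of the word is absent (there is no index before $A$ in $\mb{i}$, and no index after $C$), fewer of the six terms survive and the net effect differs from the mixed-block computation by exactly a $-\tfrac12 C^t$ correction on the diagonal, which reproduces $C^t_U - \tfrac12 C^t$ and $C^t_L - \tfrac12 C^t$.

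The calculation can be cut almost in half by noting that $B_{\Si_{\mb{i}}}$ is skew-symmetrizable with the multipliers $d_k = d'_{|i_k|}$, which depend only on the block $k$ lives in, so it suffices to compute $AA$, $AB$, $AC$, $BB$, $BC$, $CC$ and deduce $BA$, $CA$, $CB$ by transposition. The main obstacle is purely bookkeeping: keeping the six signed Iverson brackets correct across the nine cases without error. No structural insight beyond the observation that the word has the "all-negative then all-positive" shape is required.
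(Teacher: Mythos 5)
Your proposal is the same argument the paper intends: the paper's proof is literally ``can be checked directly from \cref{def:wordseed}'', and your block-by-block bookkeeping (with $\ep_k$, $k^+$ tabulated per block, the vanishing of the $AC$, $CA$ blocks, the pairwise cancellation in the $BB$ block, the single ``successor'' indicator plus doubled crossing indicators in the mixed blocks, and the boundary effect producing the $-\frac12 C^t$ shift in $AA$, $CC$) is exactly the right organization and checks out against the formula.

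One caution on your labor-saving remark: the skew-symmetrizers are $d_k = d'_{|i_k|}$, which depend on the simple-root label $|i_k|$, \emph{not} on which of the three blocks $k$ lies in, so they are not block constants in non-simply-laced type. Consequently, deducing $B_{BA}$, $B_{CA}$, $B_{CB}$ ``by transposition'' must be done with the weighted relation $B_{jk}d_k = -B_{kj}d_j$ together with the symmetrizer identity $d'_iC_{ij} = d'_jC_{ji}$ (which is what converts $C_{ba}$ into $C_{ab}$); a plain skew transpose would turn the $(A,B)$ block $C^t_L$ into the upper-triangular part of $C$ rather than of $C^t$, i.e.\ the wrong matrix whenever $C$ is not symmetric — precisely the case the paper cares about. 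Since your main computation already covers all four mixed blocks directly, this does not break the proof, but as stated the shortcut is incorrect. (A smaller nitpick: in the $BB$ block the four crossing terms carry signs $-,-,+,+$ in the order they appear in \cref{def:wordseed}, not alternating signs; they still cancel in the pairs (3,6) and (4,5), so your conclusion stands.)
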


\begin{proof}
Can be checked directly from \cref{def:wordseed}.
\end{proof}

For any $u, v \in W$, we denote by $G_{\Ad}^{u,v}/H_{\Ad}$ the quotient of $G_{\Ad}^{u,v}$ under conjugation by $H_{\Ad}$, with the following caveat.  If $\mb{j}$ is any double reduced word for $u$, $v$, then since $H_{\Ad}$ is generated by coweight subgroups and $X_{k}^{\om_{|i_k|}^\vee}$ commutes with $E_j$ for $|j| \neq |i_k|$, it follows from the definition of $x_{\mb{j}}$ that the conjugation action of $H_{\Ad}$ preserves the image of $\CX_{\Si_{\mb{j}}}$, and that a good geometric quotient $\CX_{\Si_{\mb{j}}}/H_{\Ad}$ exists.  In fact, from \cref{eq:Xtrans} it is clear that for any seed $\Si'$ mutation-equivalent to $\Si_{\mb{j}}$, the corresponding chart $\CX_{\Si'} \subset G_{\Ad}^{u,v}$ has a good quotient by $H_{\Ad}$.  These charts cover an open subset of $G_{\Ad}^{u,v}$ whose complement is of codimension at least 2, hence this open subset also has a good quotient by $H_{\Ad}$.  The question of whether or not the whole cell $G_{\Ad}^{u,v}$ admits a good quotient will not be relevant for our purposes, so we will simply write $G_{\Ad}^{u,v}/H_{\Ad}$ with the understanding that we may need to restrict to an open subset.

\begin{thm}\label{prop:coxconjquot}
The seed $\Si_C$ is the amalgamation of $\Si_{\mb{i}}$ along the map $\pi: I_{\mb{i}} \onto I_C$ given by 
\[
\pi(k) = \begin{cases} k & k>0 \\ |k| + r & k < 0. \end{cases}
\]
The map $x_{\mb{i}}: \CX_{\Si_{\mb{i}}} \into G_{\Ad}^{c,c}$ descends to an open immersion $\CX_{\Si_C} \into G_{\Ad}^{c,c}/H_{\Ad}$ intertwining the quotient and amalgamation maps:
\[
\begin{tikzcd}
\CX_{\Si_{\mb{i}}} \arrow{r}{x_{\mb{i}}} \arrow[two heads]{d}{\pi} & G_{\Ad}^{c,c} \arrow[two heads]{d}{\pi} \\
\CX_{\Si_C} \arrow{r}{x_{\mb{i}}} & G_{\Ad}^{c,c}/H_{\Ad}.\qedhere
\end{tikzcd}
\]
\end{thm}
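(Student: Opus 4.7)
The claim splits naturally into two parts: verifying the combinatorial amalgamation conditions of \cref{def:amalg} for $\pi$, and then showing that $x_{\mb{i}}$ descends through this amalgamation to an open immersion into the $H_{\Ad}$-quotient.

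For the amalgamation side, the only nontrivial identifications under $\pi$ are of the pairs $\pi^{-1}(r+j')=\{-j',\,r+j'\}$. Each such pair consists of two frozen indices of $\Si_{\mb{i}}$---the index $-j'$ by convention, and $r+j'$ because $(r+j')^+=2r+1>m$---and the required vanishing $B_{-j',\,r+j'}=0$ is then immediate from the top-right block of the matrix in \cref{lem:bmatrix}. The skew-symmetrizer condition $d_{-j'}=d_{r+j'}=d'_{j'}$ and the frozen-subset condition $\pi(I_u)\subset \wt{I}_u$ are automatic from \cref{def:wordseed}, since every index of $I_C$ is unfrozen. The amalgamated exchange matrix is then computed by summing the entries of \cref{lem:bmatrix}'s block matrix over $\pi$-preimages: using $C^t_U+C^t_L=C^t$ the off-diagonal $r\times r$ blocks collapse to $\pm C^t$ and the diagonal blocks vanish, reproducing (up to the natural $\si$-relabeling that appears in the $\si$-periodicity of \cref{defn:Cseed}) the exchange matrix $B_C$ of $\Si_C$.

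For the descent, the essential step is to identify the cocharacters of the $H_{\Ad}$-conjugation action on $\CX_{\Si_{\mb{i}}}$ with the kernel $\ker \pi_*\subset (\BZ I_{\mb{i}})^*$ of the dual amalgamation map. Conjugation by $h=t^{\om_a^\vee}$ commutes with every Cartan factor $X_k^{\om_{|i_k|}^\vee}$ in the factorized form and modifies only the two Chevalley factors of index $\pm a$, sending $F_a \mapsto F_a(t^{-1})=t^{\om_a^\vee}F_at^{-\om_a^\vee}$ and $E_a \mapsto E_a(t)=t^{\om_a^\vee}E_at^{-\om_a^\vee}$. The plan is to transport the four auxiliary $t^{\pm\om_a^\vee}$ factors through the factorization: each commutes with every Chevalley generator of index $\neq a$ and with every Cartan factor of weight $\neq\om_a^\vee$, so it may travel freely until it meets an $X^{\om_a^\vee}$ term where it is absorbed. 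Careful bookkeeping yields the net effect $X_{-a}\mapsto tX_{-a}$ and $X_{r+a}\mapsto t^{-1}X_{r+a}$, with $X_a^{\om_a^\vee}$ restored intact because the $t^{-\om_a^\vee}$ emerging to the right of $F_a$ cancels the $t^{\om_a^\vee}$ entering $E_a$ from the left. Hence the cocharacter of the $H_{\Ad}$-action along $\om_a^\vee$ is $e_{-a}-e_{r+a}$; these cocharacters annihilate every pullback character $\pi^*(e_j)$---namely $e_j$ for $j\leq r$ and $e_{-j'}+e_{r+j'}$ for $j=r+j'$---and by a rank count exhaust $\ker\pi_*$.

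It follows that the composite $\CX_{\Si_{\mb{i}}}\xrightarrow{x_{\mb{i}}} G_{\Ad}^{c,c}\onto G_{\Ad}^{c,c}/H_{\Ad}$ is constant on $\pi$-fibers and factors through an open immersion $\CX_{\Si_C}\into G_{\Ad}^{c,c}/H_{\Ad}$ intertwining the amalgamation and quotient maps, as required. The main technical obstacle is the bookkeeping in the cocharacter computation of the third paragraph; once this is carried out, the exactness of the lattice sequence underlying amalgamation together with the openness of $x_{\mb{i}}$ makes the rest formal.
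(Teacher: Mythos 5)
Your proof is correct and takes essentially the same route as the paper: the amalgamation hypotheses are read off from \cref{lem:bmatrix}, and the descent comes from computing the $H_{\Ad}$-conjugation action on the $\CX$-coordinates ($X_{-a}\mapsto tX_{-a}$, $X_{a+r}\mapsto t^{-1}X_{a+r}$, $X_a$ fixed), which the paper phrases equivalently by identifying the invariant subalgebra $\BC[\CX_{\Si_{\mb{i}}}]^{H_{\Ad}}$, generated by the $X_i$ and $X_{-i}X_{i+r}$, with $\pi^*\BC[\CX_{\Si_C}]$ rather than via cocharacter lattices. Your explicit caveat that the amalgamated exchange matrix matches $B_C$ only after the $\si$-relabeling is, if anything, more careful than the paper's one-line verification of the same point.
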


\begin{proof}
Using \cref{lem:bmatrix}, one can immediately verify that the hypothesis of \cref{def:amalg} are satisfied by $\Si_C$, $\Si_{\mb{i}}$, and $\pi$.  The conjugation-invariant subalgebra $\BC[\CX_{\Si_{\mb{i}}}]^{H_{\Ad}}$ is manifestly generated by the $X_i$, $X_{-i}X_{i+r}$, and their inverses for $1 \leq i \leq r$.  But this is equal to $\pi^*\BC[\CX_{\Si_C}]$, hence we obtain the map $\CX_{\Si_C} \into G_{\Ad}^{c,c}/H_{\Ad}$. 
\end{proof}

\begin{figure}
\begin{tikzpicture}[decoration={snake,amplitude=1.2}]
\node [matrix] (L) at (0,0)
{
\coordinate [label=below:-2] (-2) at (0,0);
\coordinate [label=below:2] (2) at (2,0);
\coordinate [label=below:4] (4) at (4,0);
\coordinate [label=-1] (-1) at (0,2);
\coordinate [label=1] (1) at (2,2);
\coordinate [label=3] (3) at (4,2);
\foreach \v in {-1,-2,1,2,3,4} {\fill (\v) circle (.06);};
\foreach \s/\t in {-2/-1,3/4} {\draw [-to,dashed,shorten <=1.7mm,shorten >=1.7mm] (\s) to (\t);};
\foreach \s/\t in {-1/1,3/1,-2/2,4/2,1/-2,2/3} {\draw [-to,shorten <=1.7mm,shorten >=1.7mm] (\s) to (\t);};\\
};
\node [matrix] (R) at (7,0)
{
\coordinate [label=below:2] (2) at (0,0);
\coordinate [label=below:4] (4) at (2,0);
\coordinate [label=1] (1) at (0,2);
\coordinate [label=3] (3) at (2,2);
\foreach \v in {1,2,3,4} {\fill (\v) circle (.06);};
\foreach \s/\t in {3/1,4/2} {
  \draw [-to,shorten <=1.7mm,shorten >=1.7mm] ($(\s)+(0,0.06)$) to ($(\t)+(0,0.06)$);
  \draw [-to,shorten <=1.7mm,shorten >=1.7mm] ($(\s)-(0,0.06)$) to ($(\t)-(0,0.06)$);
};
\draw [-to,shorten <=1.7mm,shorten >=1.7mm] (1) to (4);
\draw [shorten <=1.7mm,shorten >=1.7mm] (2) to (1,1);
\draw [-to,shorten <=1.7mm,shorten >=1.7mm] (1,1) to (3);\\
};
\draw [->,decorate] ($(L.east)+(.3,0)$) to node[above]{amalgamation} ($(R.west)+(-.3,0)$);
\end{tikzpicture}
\caption{The quivers of $\Si_{\mb{i}}$ and $\Si_C$ when $C$ is of type $A_2$.  The dashed arrows correspond to entries of $B_{\mb{i}}$ equal to $\pm \frac12$; since they connect frozen vertices they do not affect the structure of cluster transformations, but record the Poisson brackets among frozen variables.  The amalgamation itself ``glues together'' some of the frozen variables: -1 to 3 and -2 to 4.}
\end{figure}
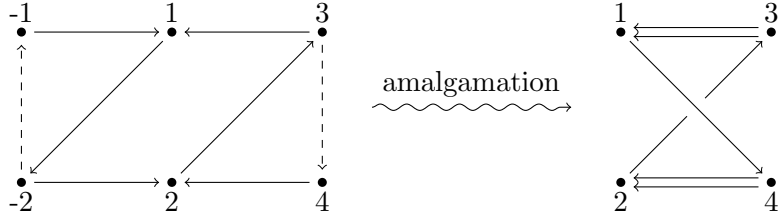

\begin{remark}\label{remark:quot}
If $\mb{j}$ is any double reduced word for $u,v \in W$, the conjugation action of $H_{\Ad}$ on $G_{\Ad}^{u,v}$ will always have a comparably simple expression in the associated $\CX$-coordinates.  However, it is not always the case that quotient map $\CX_{\Si_{\mb{j}}} \onto \CX_{\Si_{\mb{j}}}/H_{\Ad}$ is an amalgamation map.  For example, if $u=c$ but $v=e$, the hypotheses of \cref{def:amalg} will not be satisfied by the quotient map.  However, if $u$ and $v$ are (possibly distinct) Coxeter elements, there will be a unique amalgamation $\wt{\Si}$ of $\Si_{\mb{j}}$ and isomorphism $\CX_{\wt{\Si}} \isom \CX_{\Si_{\mb{j}}}/H_{\Ad}$ intertwining the quotient and amalgamation maps from $\CX_{\Si_{\mb{j}}}$.  In fact, when $u$ and $v$ are Coxeter elements conjugate to $c$, the reader can check that the resulting seed $\wt{\Si}$ is mutation-equivalent to $\Si_C$.  For $GL_n$, this was previously observed (from a different point of view) in \cite{Gekhtman2011}.
\end{remark}

Recall that an integrable system on a (smooth) symplectic variety is a Poisson-commutative subalgebra of its coordinate ring whose differentials generically span Lagrangian subspaces of its cotangent spaces, inducing a Lagrangian foliation of an open subset.  By an integrable system on a Poisson variety we will mean an algebra of functions which restricts to an integrable system on a generic symplectic leaf.

\begin{prop}\label{prop:continuousintegrability}
\emph{(\cite{Hoffmann2000},\cite{Williams})} If $C$ is of finite or affine type, the restrictions of the conjugation-invariant functions on $G_{\Ad}$ form an integrable system on $G^{c,c}_{\Ad}/H_{\Ad}$.  
\end{prop}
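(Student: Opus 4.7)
The plan is to verify the three defining properties of an integrable system for the algebra $\BC[G_{\Ad}]^{G_{\Ad}}$ restricted to $G^{c,c}_{\Ad}/H_{\Ad}$: Poisson commutativity, existence of sufficiently many functionally independent Hamiltonians, and a Lagrangian-dimensional span on a generic symplectic leaf.

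For Poisson commutativity I would invoke the standard fact that on a semisimple Poisson--Lie group with the Sklyanin bracket, the subalgebra $\BC[G_{\Ad}]^{G_{\Ad}}$ of $\Ad$-invariant functions is Poisson-commutative; this is a consequence of the $\Ad$-invariance of both the invariant functions and of the symmetric 2-tensor used to build the bracket, which forces cancellation of the left- and right-invariant r-matrix contributions. Since $G^{c,c}_{\Ad}$ is a Poisson subvariety of $G_{\Ad}$ and conjugation by $H_{\Ad}$ is a Poisson action, the bracket descends to $G^{c,c}_{\Ad}/H_{\Ad}$ and the restricted invariants remain pairwise Poisson-commutative.

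In finite type the remaining two conditions reduce to dimension and genericity checks. One has $\dim G^{c,c}_{\Ad} = r + 2\ell(c) = 3r$, hence $\dim G^{c,c}_{\Ad}/H_{\Ad} = 2r$, matching twice the number of fundamental characters $\chi_{\om_1},\dots,\chi_{\om_r}$ generating $\BC[G_{\Ad}]^{G_{\Ad}}$. Because a Coxeter element is regular, $G^{c,c}$ meets the regular semisimple locus in an open dense subset, on which the fundamental characters are functionally independent, and a parallel analysis of the symplectic leaves of $G_{\Ad}$ under the standard Poisson--Lie structure shows that a generic leaf of the quotient has dimension $2r$. The $r$-dimensional span of the differentials $d\chi_{\om_i}$ is then Lagrangian in such a leaf, delivering the integrable structure.

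In affine type $\BC[G_{\Ad}]^{G_{\Ad}}$ must be replaced by an appropriate analogue, such as characters of integrable level-zero modules, or equivalently the coefficients of the characteristic polynomial of a loop-group Lax matrix viewed as polynomials in the loop parameter. The main obstacle is that both the dimension count and the functional independence become more delicate in the presence of the central and derivation directions of the Kac--Moody Cartan; a spectral-curve argument identifying the conserved quantities with traces of powers of a Lax matrix establishes independence, and the details are carried out by the author in \cite{Williams2012}.
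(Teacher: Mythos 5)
Your finite-type argument is essentially the route the paper takes, since the paper simply cites \cite{Hoffmann2000} for that case: Poisson-commutativity of the conjugation-invariant functions for the standard Poisson--Lie bracket, the count $\dim G^{c,c}_{\Ad} = 3r$ and hence $\dim G^{c,c}_{\Ad}/H_{\Ad} = 2r$, and the independence of the $r$ invariant generators are exactly the content of that reference. Be aware, though, that the step you phrase as ``a parallel analysis of the symplectic leaves'' is where the real work sits: independence of the characters at regular (semisimple) points of $G_{\Ad}$ does not by itself show that their \emph{restrictions} to a $2r$-dimensional leaf of the quotient still span an $r$-dimensional, hence Lagrangian, subspace; that is precisely what \cite{Hoffmann2000} proves, so at best this part of your argument is a restatement of the citation rather than a proof.

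The genuine gap is in affine type. You defer the details to \cite{Williams2012}, but (as the paper's own proof points out) that reference establishes integrability on $(G')^{c,c}/H$, where $G'$ is the \emph{centrally extended} algebraic loop group and the generic symplectic leaves have dimension $2r+2$; it does not literally contain the statement for $G^{c,c}_{\Ad}/H_{\Ad}$ with $G_{\Ad}$ the minimal adjoint form $(L\mr{G}/Z(\mr{G}))\rtimes\BC^*$. Two things must still be supplied, and they are exactly what the paper's proof adds. First, one must exhibit the Hamiltonians as honest conjugation-invariant regular functions on this form of the group: the paper pulls back $\BC[\mr{G}]^{\mr{G}}$ along the evaluation map $L\mr{G}\times\BC^*\to\mr{G}$ and takes the $\BC^*$-invariant components, so that they extend to $L\mr{G}\rtimes\BC^*$ and descend to $G_{\Ad}$ (so no ``replacement'' of the statement is needed, contrary to your framing, only a construction of enough invariants). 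Second, the statement includes the twisted affine types, for which $G_{\Ad}$ is not the loop group of a simple group and your Lax-matrix/spectral-curve language does not directly apply; the paper handles these by embedding the twisted group into an algebraic loop group as the fixed-point subgroup of a diagram automorphism. Your proposal addresses neither point, so as written it proves less than the proposition claims.
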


\begin{proof}
We only comment that the affine case treated in \cite{Williams} is slightly different from the present one, though the proof there extends straightforwardly.  In loc. cited it was shown that the invariants restrict to form an integrable system on $(G')^{c,c}/H$, where $G'$ is the central extension of the algebraic loop group $L\mr{G}$.  This is actually more delicate, as its symplectic leaves are of dimension $2r+2$, rather than $2r$ (where $r$ is the rank of $\mr{G}$).  For the present case the needed Hamiltonians are derived from the invariant ring $\BC[\mr{G}]^{\mr{G}}$: we pull back this subalgebra along the evaluation map $L\mr{G} \times \BC^* \to \mr{G}$ and take the component invariant under the $\BC^*$ action (in particular they extend to functions on the semidirect product $L\mr{G} \rtimes \BC^*$).  The Hamiltonians for groups of twisted affine type may be produced similarly by embedding them into algebraic loop groups as subgroups invariant under a diagram automorphism.
\end{proof}

\begin{thm}\label{prop:factorization}
The cluster automorphism $\wh{\mu}_{\si}$ of $\CX_{\Si_C}$ coincides with the restriction of the following rational automorphism of $G_{\Ad}^{c,c}/H_{\Ad}$.  Given $g \in G_{\Ad}^{c,c}/H_{\Ad}$, there will generically be unique elements $h_1, h_2 \in H_{\Ad}$ such that, up to conjugation by $H_{\Ad}$,
\[
g = \left((\prod_{\mathclap{1 \leq i \leq r}}^{\curvearrowright} E_i) h_1\right)\left( (\prod_{\mathclap{1 \leq i \leq r}}^{\curvearrowright} F_i) h_2\right).
\]
The rational automorphism of $G_{\Ad}^{c,c}/H_{\Ad}$ is then the factorization mapping
\[
g = \left((\prod_{\mathclap{1 \leq i \leq r}}^{\curvearrowright} E_i) h_1\right)\left( (\prod_{\mathclap{1 \leq i \leq r}}^{\curvearrowright} F_i) h_2\right) \mapsto \left( (\prod_{\mathclap{1 \leq i \leq r}}^{\curvearrowright} F_i) h_2\right)\left((\prod_{\mathclap{1 \leq i \leq r}}^{\curvearrowright} E_i) h_1\right),
\]
taken up to conjugation by $H_{\Ad}$.  Here the product notation indicates we order the terms from left to right by increasing $i$.  In particular, $\wh{\mu}_{\si}$ preserves the restrictions of any conjugation-invariant functions on $G_{\Ad}$, and in finite or affine type is discrete integrable in the Liouville sense.  
\end{thm}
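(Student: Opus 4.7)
The plan is to verify the identification by direct computation in the chart $x_{\mb{i}}: \CX_{\Si_{\mb{i}}} \into G_{\Ad}^{c,c}$ and descending via the amalgamation $\pi: \CX_{\Si_{\mb{i}}} \onto \CX_{\Si_C}$ supplied by \cref{prop:coxconjquot}. By \cref{prop:amalgcommutes}, the mutation sequence $\wh{\mu}$ lifts to the same sequence at the unfrozen indices $1,\dots,r$ on $\Si_{\mb{i}}$; the needed amalgamation hypotheses are preserved throughout, which can be read off from the upper/lower-triangular block structure of $B_{\mb{i}}$ in \cref{lem:bmatrix}, since no new connections appear between the pairs of frozen indices glued by $\pi$.

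On the group side, using the fact that $X_k^{\om_k^\vee}$ commutes with $F_j$ and $E_j$ for $j \neq k$, the formula for $x_{\mb{i}}$ in \cref{defn:Xdef} simplifies: any generic $g \in x_{\mb{i}}(\CX_{\Si_{\mb{i}}})$ can be rewritten as
\[
g = (X_{-r}^{\om_r^\vee} \cdots X_{-1}^{\om_1^\vee})\, F_1 \cdots F_r\, (X_1^{\om_1^\vee} \cdots X_r^{\om_r^\vee})\, E_1 \cdots E_r\, (X_{r+1}^{\om_1^\vee} \cdots X_{2r}^{\om_r^\vee}).
\]
Under the amalgamation, the coordinates on $\CX_{\Si_C}$ are $Y_k = X_k$ for $1 \leq k \leq r$ and $Y_{r+k} = X_{-k} X_{r+k}$, and the residual freedom in $X_{-1},\ldots,X_{-r}$ is precisely the $H_{\Ad}$-conjugation gauge; after fixing it, one obtains a canonical representative of $[g]$ depending only on the $Y$'s.

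To recast $g$ in the factorization normal form $(E_1 \cdots E_r) h_1 (F_1 \cdots F_r) h_2$, I would commute each $F_i$ past the $E_j$'s: since these commute for $j \neq i$, the only nontrivial step is the $SL_2$-identity $F_i(s) E_i(t) = E_i(\tfrac{t}{1+st})\,(1+st)^{-\al_i^\vee}\, F_i(\tfrac{s}{1+st})$, applied successively for $i = 1, \ldots, r$. Collecting the accumulated torus factors together with the original middle and outer factors, and absorbing appropriate pieces into $H_{\Ad}$-conjugation, yields explicit Laurent expressions for $h_1, h_2$ in terms of the $Y$'s. In parallel, applying the cluster transformations \eqref{eq:Xtrans} for $\mu_1 \circ \cdots \circ \mu_r$ followed by the permutation $\si$ produces new $Y$-coordinates. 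The rank-one case confirms the pattern directly: the cluster mutations yield $Y_1' = Y_1^2 Y_2/(1+Y_1)^2$ and $Y_2' = Y_1^{-1}$, which exactly match the parameters $(h_2, h_1)$ of the factorization and hence the swap $AB \mapsto BA$. The general case proceeds inductively, with each $\mu_k$ encoding the single commutation of $F_k$ past $E_k$ together with the rescalings imposed on neighboring factors, and the permutation $\si$ implementing the final interchange of the two halves.

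The integrability claim then follows formally: for any character $\chi$ of a finite-dimensional $G_{\Ad}$-representation one has $\chi(ab) = \chi(ba)$, so the factorization map preserves the restrictions of conjugation-invariant functions to $G_{\Ad}^{c,c}/H_{\Ad}$. In finite or affine type \cref{prop:continuousintegrability} supplies enough such invariants to form an integrable system, yielding Liouville integrability of $\wh{\mu}_\si$. The main obstacle in the proof is the rank-$r$ bookkeeping of accumulated torus factors: although each individual commutation is elementary, tracking how the $(1 + s_i t_i)^{-\al_i^\vee}$ factors interact with the non-orthogonal $\al_j^\vee$'s they pass through and with the existing middle coweight factors requires careful organization, which the block structure of $B_{\mb{i}}$ in \cref{lem:bmatrix} makes manageable.
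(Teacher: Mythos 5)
Your overall skeleton is the paper's: descend the mutation sequence through the amalgamation of \cref{prop:coxconjquot} via \cref{prop:amalgcommutes}, identify the resulting automorphism with the refactorization $g=\bigl(\prod E_i\bigr)h_1\bigl(\prod F_i\bigr)h_2\mapsto\bigl(\prod F_i\bigr)h_2\bigl(\prod E_i\bigr)h_1$ up to $H_{\Ad}$-conjugation, and deduce integrability from invariance of conjugation-invariant functions together with \cref{prop:continuousintegrability}. The genuine divergence is at the central step, and there your proposal has a gap. The paper never pushes the $F_i$'s past the $E_j$'s by hand: it invokes \cref{prop:mixedmoves}, which says exactly that exchanging an adjacent opposite-sign pair in a double reduced word is a cluster $\CX$-transformation, so moving the block $(-1,\dots,-r)$ past $(1,\dots,r)$ identifies $x_{\mb{i}}(X)$ with $x_{\mb{i}'}(\wh{\mu}(X))$ for $\mb{i}'=(1,\dots,r,-1,\dots,-r)$; amalgamating and then applying $\si$ is then literally the factorization map, with $h_1=\prod(X'_i)^{\om_i^\vee}$ and $h_2=\prod(X'_{i+r})^{\om_i^\vee}$.

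You instead propose to re-derive this identification by iterating $F_i(s)E_i(t)=E_i\bigl(\tfrac{t}{1+st}\bigr)(1+st)^{-\al_i^\vee}F_i\bigl(\tfrac{s}{1+st}\bigr)$, but you only complete this in rank one and explicitly defer the rank-$r$ bookkeeping, which is precisely the substance of the theorem: after one such commutation the factor $(1+st)^{-\al_k^\vee}$ fails to commute with $E_j$, $F_j$ for $j$ adjacent to $k$ in the diagram, so the intermediate expressions are not in the normal form your next inductive step presupposes, and matching the accumulated rescalings against the composite of the $\CX$-mutations \eqref{eq:Xtrans} (one mutation per pair, interleaved with the trivial commutations for $|j|\neq k$) is the nontrivial part you would in effect be reproving from \cite{Fock2006}. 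As written, ``the general case proceeds inductively'' is an assertion of the Coxeter case of \cref{prop:mixedmoves}, not a proof of it; the gap closes either by carrying out that induction carefully (tracking that each intermediate word is again a double reduced word for $(c,c)$) or, far more efficiently, by citing \cref{prop:mixedmoves} as the paper does. Your rank-one verification, the hypothesis check for \cref{prop:amalgcommutes}, and the concluding integrability argument are all fine.
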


\begin{proof}
By \cref{prop:mixedmoves}, the $\CX$-coordinates on $\CX_{\Si_{\mb{i}}}$ and $\CX_{\Si_{\mb{i}}'}$ (where $\Si_{\mb{i}}' = \wh{\mu}(\Si_{\mb{i}})$) are related by
\begin{multline*}
\left(\prod_{1 \leq i \leq r}^{\curvearrowright} X_{-i}^{\om_i^\vee}\right)\left(\prod_{1 \leq i \leq r}^{\curvearrowright} F_iX_{i}^{\om_i^\vee}\right)\left(\prod_{1 \leq i \leq r}^{\curvearrowright} E_{i}X_{i+r}^{\om_i^\vee}\right) = \\
\left(\prod_{1 \leq i \leq r}^{\curvearrowright} (X'_{-i})^{\om_i^\vee}\right)\left(\prod_{1 \leq i \leq r}^{\curvearrowright} E_{i}(X'_{i})^{\om_i^\vee}\right)\left(\prod_{1 \leq i \leq r}^{\curvearrowright} F_i(X'_{i+r})^{\om_i^\vee}\right).
\end{multline*}
It is straightforward to see that each of the seeds $\mu_k \circ \cdots \circ \mu_r(\Si_{\mb{i}})$ satisfy the hypotheses of \cref{def:amalg} with respect to $\pi: I_{\mb{i}} \onto I_C$, hence we can apply \cref{prop:amalgcommutes} to obtain
\[
\begin{tikzcd}
\CX_{\Si_{\mb{i}}} \arrow[dashed]{r}{\wh{\mu}} \arrow[two heads]{d}{\pi} & \CX_{\Si_{\mb{i}}'} \arrow[two heads]{d}{\pi} \\
\CX_{\Si_C} \arrow[dashed]{r}{\wh{\mu}} & \CX_{\Si_C'}.
\end{tikzcd}
\]
In particular, the $\CX$-coordinates on $\CX_{\Si_C}$ and $\CX_{\Si'_C}$ are related by
\[
\left(\prod_{1 \leq i \leq r}^{\curvearrowright} F_iX_{i}^{\om_i^\vee}\right)\left(\prod_{1 \leq i \leq r}^{\curvearrowright} E_{i}X_{i+r}^{\om_i^\vee}\right) =
\left(\prod_{1 \leq i \leq r}^{\curvearrowright} E_{i}(X'_{i})^{\om_i^\vee}\right)\left(\prod_{1 \leq i \leq r}^{\curvearrowright} F_i(X'_{i+r})^{\om_i^\vee}\right),
\]
up to conjugation by $H_{\Ad}$.

The isomorphism $\CX_{\Si'_C} \isom \CX_{\Si_C}$ given by $\si$ then induces a rational automorphism of $G_{\Ad}^{c,c}/H_{\Ad}$ through
\[
\left(\prod_{1 \leq i \leq r}^{\curvearrowright} E_{i}(X'_{i})^{\om_i^\vee}\right)\left(\prod_{1 \leq i \leq r}^{\curvearrowright} F_i(X'_{i+r})^{\om_i^\vee}\right) \mapsto \left(\prod_{1 \leq i \leq r}^{\curvearrowright} F_i(X'_{i+r})^{\om_i^\vee}\right)\left(\prod_{1 \leq i \leq r}^{\curvearrowright} E_{i}(X'_{i})^{\om_i^\vee}\right).
\]
But this is just the map described in the theorem, with $h_1 = \prod(X'_{i})^{\om_i^\vee}$ and $h_2 = \prod(X'_{i+r})^{\om_i^\vee}$.  That $\wh{\mu}_\si$ preserves invariant functions is clear, hence we obtain discrete integrability in finite and affine types by \cref{prop:continuousintegrability}.  Note that in affine type even though the symplectic leaves of $\CX_{\Si_C}$ are of positive codimension, $\wh{\mu}_\si$ preserves the distinguished symplectic leaf hence restricts to an integrable symplectomorphism of it.
\end{proof}

\section{Q-Systems}\label{sec:q-systems}

$Q$-systems are nonlinear recurrence relations associated with affine Dynkin diagrams $X_N^{(\ka)}$.  We review their normalized versions and cluster-algebraic realizations following \cite{Kedem2013,DiFrancesco2013}, which we extend to include twisted types.  In twisted and simply-laced untwisted types these systems are encoded by the seeds $\Si_C$ studied in the previous section.  The $Q$-system itself is realized by a sequence of cluster transformations coinciding with that of the corresponding factorization mapping, though realized by cluster variables rather than $\CX$-coordinates.  Since the relevant exchange matrix is nondegenerate, the two sets of variables differ by a finite map, leading to the discrete integrability of these $Q$-systems.

Recall that affine Dynkin diagrams are classified by pairs of a finite-type diagram $X_N$ and an automorphism of order $\ka$.  This induces an automorphism of the simple Lie algebra of type $X_N$, whose invariant subalgebra is also simple and whose type we denote by $Y_M$.  Clearly for untwisted types ($\ka = 1$) we have $X_N = Y_M$, while for twisted types the correspondence is given below.  It is summarized by the fact that the Langlands dual of $X_N^{(\ka)}$ is the affinization of the Langlands dual of $Y_M$.  

\begin{center}
\renewcommand*{\arraystretch}{1.5}
\begin{tabular}{ c || c | c | c | c }
$X_N^{(\ka)}$ & $A_{2r-1}^{(2)}$ & $D_{r+1}^{(2)}$ & $E_6^{(2)}$ & $D_4^{3}$ \\
\hline
$Y_M$ & $C_r$ & $B_r$ & $F_4$ & $G_2$ \\
\end{tabular}
\end{center}

\begin{defn}
The $Q$-system of type $X_N^{(\ka)}$ is the following recurrence relation in the commuting variables $\{Q_n^{(a)}\}$, where $n \in \BZ$ is a discrete ``time'' variable and $a$ is an index labeled by the roots of $Y_M$.  If $X_N^{(\ka)}$ is of untwisted simply-laced type and $C$ the Cartan matrix of type $X_N$, the corresponding $Q$-system is
\begin{gather*}
(Q^{(a)}_{n})^2 = Q^{(a)}_{n-1}Q^{(a)}_{n+1} + \prod_{b \neq a} (Q^{(a)}_{n})^{-C_{ba}}.
\end{gather*}
For $X_N^{(\ka)}$ of twisted type, the corresponding $Q$-systems are as follows \cite{Hatayama2002,Hernandez2010}:
\begin{align*}
A_{2r-1}^{(2)} &\begin{cases}
(Q^{(a)}_{n})^2 = Q^{(a)}_{n-1}Q^{(a)}_{n+1}+Q^{(a-1)}_{n}Q^{(a+1)}_{n} & \hspace{23.5pt} 1 \leq a < r \\
(Q^{(r)}_{n})^2 = Q^{(r)}_{n-1}Q^{(r)}_{n+1} + (Q^{(r)}_{n})^2 & 
\end{cases} \\
D_{r+1}^{(2)} &\begin{cases}
(Q^{(a)}_{n})^2 = Q^{(a)}_{n-1}Q^{(a)}_{n+1}+Q^{(a-1)}_{n}Q^{(a+1)}_{n} & 1 \leq a < r-1 \\
(Q^{(r-1)}_{n})^2 = Q^{(r-1)}_{n-1}Q^{(r-1)}_{n+1} + Q^{(r-2)}_{n}(Q^{(r)}_{n})^2 & \\
(Q^{(r)}_{n})^2 = Q^{(r)}_{n-1}Q^{(r)}_{n+1} + Q^{(r-1)}_{n}& 
\end{cases} \\
E_6^{(2)} & \begin{cases}
(Q^{(1)}_{n})^2 = Q^{(1)}_{n-1}Q^{(1)}_{n+1} + Q^{(2)}_{n} & \\
(Q^{(2)}_{n})^2 = Q^{(2)}_{n-1}Q^{(2)}_{n+1} + Q^{(1)}_{n}Q^{(3)}_{n} & \\
(Q^{(3)}_{n})^2 = Q^{(3)}_{n-1}Q^{(3)}_{n+1} + (Q^{(2)}_{n})^2Q^{(4)}_{n} & \\
(Q^{(4)}_{n})^2 = Q^{(4)}_{n-1}Q^{(4)}_{n+1} + Q^{(3)}_{n} & 
\end{cases} \\
D_4^{3} & \begin{cases}
(Q^{(1)}_{n})^2 = Q^{(1)}_{n-1}Q^{(1)}_{n+1} + Q^{(2)}_{n} & \\
(Q^{(2)}_{n})^2 = Q^{(2)}_{n-1}Q^{(2)}_{n+1} + (Q^{(1)}_{n})^3 & 
\end{cases}
\end{align*}
Here we set $Q^{(0)}_n = 1$ and enumerate the roots of $Y_M$ as in \cref{fig:twistedlabels}.
\end{defn}

\begin{figure}
\begin{tikzpicture}
\newcommand*{\Drad}{1}
\newcommand*{\Ddotsdist}{1}
\newcommand*{\DrawDots}[1]{
  \fill ($(#1) + .25*(\Ddotsdist,0)$) circle (.03);
  \fill ($(#1) + .5*(\Ddotsdist,0)$) circle (.03);
  \fill ($(#1) + .75*(\Ddotsdist,0)$) circle (.03);
}
\newcommand*{\DrawDoubleEdge}[1]{
  \draw ($(#1)+(0,.06)$) -- ($(#1)+(\Drad,.06)$);
  \draw ($(#1)+(0,-.06)$) -- ($(#1)+(\Drad,-.06)$);
}
\newcommand*{\DrawTripleEdge}[1]{
  \draw ($(#1)+(0,.06)$) -- ($(#1)+(\Drad,.06)$);
  \draw ($(#1)+(0,0)$) -- ($(#1)+(\Drad,0)$);
  \draw ($(#1)+(0,-.06)$) -- ($(#1)+(\Drad,-.06)$);
}
\newcommand*{\DrawLessThan}[1]{
  \draw ($(#1)+(.5*\Drad,0)+(.1,.2)$) -- ++(-.2,-.2) -- ++(.2,-.2);
}
\newcommand*{\DrawGreaterThan}[1]{
  \draw ($(#1)+(.5*\Drad,0)+(-.1,.2)$) -- ++(.2,-.2) -- ++(-.2,-.2);
}
\newcommand*{\DrawVertex}[1]{\draw [fill=white] (#1) circle (.12);}
\newcommand*{\BlankVertex}[1]{\path [fill=white] (#1) circle (.12);}
\newcommand*{\LabelBelow}[2]{
  \node at ($(#1)-(0,.4)$) {{\small $#2$}};
}
\newcommand*{\LabelRight}[2]{
  \node at ($(#1)+(.4,0)$) {{\small $#2$}};
}
\node [matrix] (A) at (0,0)
{
\coordinate (0) at (\Drad,\Drad);
\coordinate (1) at (0,0);
\coordinate (2) at (\Drad,0);
\coordinate (3) at (2*\Drad,0);
\coordinate (r-2) at (2*\Drad+\Ddotsdist,0);
\coordinate (r-1) at (3*\Drad+\Ddotsdist,0);
\coordinate (r) at (4*\Drad+\Ddotsdist,0);
\draw (1) -- (3);
\draw (0) -- (2);
\draw (r-2) -- (r-1);
\DrawDoubleEdge{r-1};
\DrawLessThan{r-1};
\DrawDots{3};
\foreach \v in {3,r-2} {\BlankVertex{\v}};
\foreach \v in {0,1,2,r-1,r} {\DrawVertex{\v}};
\foreach \v in {1,2,r-1,r} {\LabelBelow{\v}{\v}};
\LabelRight{0}{0}\\
};
\node [matrix] (D) at (7,0)
{
\coordinate (0) at (0,0);
\coordinate (1) at (\Drad,0);
\coordinate (2) at (2*\Drad,0);
\coordinate (r-2) at (2*\Drad+\Ddotsdist,0);
\coordinate (r-1) at (3*\Drad+\Ddotsdist,0);
\coordinate (r) at (4*\Drad+\Ddotsdist,0);
\draw (1) -- (2);
\draw (r-2) -- (r-1);
\DrawDoubleEdge{0};
\DrawLessThan{0};
\DrawDoubleEdge{r-1};
\DrawGreaterThan{r-1};
\DrawDots{2};
\foreach \v in {2,r-2} {\BlankVertex{\v}};
\foreach \v in {0,1,r-1,r} {\DrawVertex{\v}};
\foreach \v in {0,1,r-1,r} {\LabelBelow{\v}{\v}};\\
};
\node [matrix] (E6) at (0,-2)
{
\foreach \v in {0,...,4} {\coordinate (\v) at (\v*\Drad,0);};
\draw (0) -- (2);
\draw (3) -- (4);
\DrawDoubleEdge{2};
\DrawLessThan{2};
\foreach \v in {0,...,4} {\DrawVertex{\v} \LabelBelow{\v}{\v}};\\
};
\node [matrix] (D4) at (7,-2)
{
\foreach \v in {0,1,2} {\coordinate (\v) at (\v*\Drad,0);};
\draw (0) -- (1);
\DrawTripleEdge{1};
\DrawLessThan{1};
\foreach \v in {0,1,2} {\DrawVertex{\v} \LabelBelow{\v}{\v};}\\
};
\node at ($(A.west)+(-.6,.23)$) {$A_{2r-1}^{(2)}$};
\node at ($(D.east)+(.6,.23)$) {$D_{r+1}^{(2)}$};
\node at ($(E6.west)+(-.5,.23)$) {$E_{6}^{(2)}$};
\node at ($(D4.east)+(.6,.23)$) {$D_{4}^{(3)}$};
\end{tikzpicture}
\caption{Affine Dynkin diagrams of twisted type and enumerations of their vertices.  The diagram $Y_M$ is the subdiagram whose nodes have nonzero labels.}
\label{fig:twistedlabels}
\end{figure}



We omit the definition of the $Q$-systems of nonsimply-laced untwisted type, as they lie outside the scope of our main result.  Also absent from the above discussion is the twisted type $A_{2n}^{(2)}$; its relationship with the corresponding finite type is more subtle, and it does not admit an interpretation in terms of cluster transformations.\footnote{It contains the relation $(Q^{(r)}_{n})^2 = Q^{(r)}_{n-1}Q^{(r)}_{n+1} + Q^{(r-1)}_{n}Q^{(r)}_{n}$, whose terms cannot be rearranged into an exchange relation since $Q^{(r)}_{n}$ appears on both sides.}  Thus when referring to a generic twisted type $X_N^{(\ka)}$ we will tacitly assume it is not of type $A_{2n}^{(2)}$.

The correspondence between $X_N^{(\ka)}$ and $Y_M$ allows us to write the above $Q$-systems uniformly as follows:

\begin{prop}
Let $X_N^{(\ka)}$ be of twisted type or simply-laced untwisted type, and $C$ the Cartan matrix of the associated finite type $Y_M$.  Then the $Q$-system of type $X_N^{(\ka)}$ may be written as
\begin{gather*}
(Q^{(a)}_{n})^2 = Q^{(a)}_{n-1}Q^{(a)}_{n+1} + \prod_{b \neq a} (Q^{(a)}_{n})^{-C_{ba}}.
\end{gather*}
\end{prop}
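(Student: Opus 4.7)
The proof is a case-by-case verification matching the explicit $Q$-system relations given in the previous definition against the uniform formula. For simply-laced untwisted types the claim is tautological, as the stated formula is precisely the definition recalled above. For the four twisted types it remains to check the identity directly from the Cartan matrix of $Y_M$.

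The plan is as follows. For each twisted type $X_N^{(\ka)} \in \{A_{2r-1}^{(2)}, D_{r+1}^{(2)}, E_6^{(2)}, D_4^{(3)}\}$, I would identify the associated finite type $Y_M \in \{C_r, B_r, F_4, G_2\}$ and record its Cartan matrix $C$ in the ordering of simple roots inherited from the labeling of the nonzero vertices in \cref{fig:twistedlabels}. With this ordering the arrows on the multiple edges determine which simple root is long and which is short (the arrow conventionally points from the long to the short root), thereby fixing the sign and magnitude of each off-diagonal entry $C_{ab} = \langle \al_a^\vee | \al_b \rangle$.

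With that bookkeeping in place, for each index $a$ I would expand the right-hand side $\prod_{b \neq a}(Q^{(b)}_n)^{-C_{ba}}$ of the uniform formula and compare with the explicit relation in the preceding definition. For instance, in type $A_{2r-1}^{(2)}$ one has $Y_M = C_r$ with $C_{r,r-1} = -1$ and $C_{r-1,r} = -2$, so the product reduces to $Q^{(a-1)}_n Q^{(a+1)}_n$ for $a < r$ and to $(Q^{(r-1)}_n)^2$ for $a = r$, which reproduces the two cases of the $A_{2r-1}^{(2)}$ relations. Entirely analogous bookkeeping handles $D_{r+1}^{(2)}$, $E_6^{(2)}$, and $D_4^{(3)}$: in each case the only interesting check occurs at the simple roots adjacent to the multiple edge, where the exponent $-C_{ba}$ becomes $2$ or $3$ in precisely the way needed to reproduce the quadratic factor $(Q^{(r)}_n)^2$ in $D_{r+1}^{(2)}$, the factor $(Q^{(2)}_n)^2$ in $E_6^{(2)}$, and the cubic factor $(Q^{(1)}_n)^3$ in $D_4^{(3)}$.

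There is no conceptual obstacle; the only step requiring care is the orientation of the arrow on each multiple edge in \cref{fig:twistedlabels} and its translation to the sign pattern of the off-diagonal Cartan entries, since a misreading would swap a squared and a linear factor on the right-hand side. Once the conventions are fixed consistently with the figure, the four verifications are mechanical and independent of each other.
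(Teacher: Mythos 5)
Your proposal is correct and is essentially the paper's own argument: the paper's proof consists of the single line ``Follows by inspection of the above list and the definition of $Y_M$,'' i.e.\ exactly the case-by-case comparison of the uniform formula with the listed twisted relations via the Cartan matrix of $Y_M$ that you spell out. Your sign/convention bookkeeping at the multiple edges matches the intended reading (note the displayed relations contain an evident typo, $(Q^{(r)}_n)^2$ for $(Q^{(r-1)}_n)^2$ in type $A_{2r-1}^{(2)}$, and the uniform formula's base should be $Q^{(b)}_n$), so no gap remains.
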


\begin{proof}
Follows by inspection of the above list and the definition of $Y_M$.
\end{proof}

To realize $Q$-systems in terms of cluster transformations, it is convenient to replace them with certain normalized, but equivalent, $Q$-systems.  These normalized variables differ from those of the usual $Q$-system via rescaling by certain roots of unity.  

\begin{prop}
\emph{(\cite{Kedem2013,DiFrancesco2013})} The normalized $Q$-system
\begin{align}\label{eq:normqsys}
\wt{Q}^{(a)}_{n-1}\wt{Q}^{(a)}_{n+1} = (\wt{Q}^{(a)}_{n})^2 + \prod_{b \neq a} (\wt{Q}^{(b)}_{n})^{-C_{ba}}
\end{align}
is equivalent to the ordinary $Q$-system under the rescaling $\wt{Q}^{(a)}_{n} = \ep_{a}Q^{(a)}_{n}$, where the $\ep_{a} \in \BC$ are defined by $\prod_{1 \leq a \leq r} \ep_{a}^{C_{ab}}=-1$ for all $1 \leq b \leq r$.  
\end{prop}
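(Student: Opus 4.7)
The plan is to prove this by direct substitution: plug $\wt{Q}^{(a)}_n = \ep_a Q^{(a)}_n$ into the normalized $Q$-system \eqref{eq:normqsys} and read off the condition on the $\ep_a$ needed to recover the ordinary $Q$-system.

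First, I substitute and compute each side. The left hand side $\wt{Q}^{(a)}_{n-1}\wt{Q}^{(a)}_{n+1}$ becomes $\ep_a^2 Q^{(a)}_{n-1}Q^{(a)}_{n+1}$, and similarly $(\wt{Q}^{(a)}_n)^2 = \ep_a^2 (Q^{(a)}_n)^2$. The monomial term becomes $\prod_{b \neq a}\ep_b^{-C_{ba}}(Q^{(b)}_n)^{-C_{ba}}$. Dividing the resulting equation through by $\ep_a^2$ and using $C_{aa} = 2$ to rewrite $\ep_a^{-2} = \ep_a^{-C_{aa}}$, I obtain
\[
Q^{(a)}_{n-1}Q^{(a)}_{n+1} = (Q^{(a)}_n)^2 + \Bigl(\prod_{b} \ep_b^{C_{ba}}\Bigr)^{\!-1} \prod_{b \neq a}(Q^{(b)}_n)^{-C_{ba}}.
\]

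Next I compare with the original $Q$-system, rewritten as $Q^{(a)}_{n-1}Q^{(a)}_{n+1} = (Q^{(a)}_n)^2 - \prod_{b\neq a}(Q^{(b)}_n)^{-C_{ba}}$. The two equations agree for every $a$ if and only if $\prod_b \ep_b^{C_{ba}} = -1$ for all $a$, which is precisely the stated condition (after reindexing $a \leftrightarrow b$).

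Finally, I check that such constants $\ep_a \in \BC^\ast$ actually exist. Since $C$ is a finite-type Cartan matrix it is invertible over $\BQ$, so taking logarithms (with any chosen branch) the linear system $\sum_a C_{ab}\log \ep_a = \log(-1) = i\pi$ admits a solution, and exponentiating yields the desired $\ep_a$. This final observation is routine; the only real step of the argument is the substitution above, and I do not foresee any genuine obstacle beyond keeping the index conventions $C_{ab}$ versus $C_{ba}$ straight.
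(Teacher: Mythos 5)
Your proof is correct and matches the paper's approach: the paper likewise notes that existence of the $\ep_a$ follows from the nondegeneracy of $C$ and treats the derivation of \eqref{eq:normqsys} as a straightforward substitution, which you have simply carried out explicitly (using $C_{aa}=2$ to absorb the $\ep_a^{-2}$ into the product and matching signs). No issues.
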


\begin{proof}
Note that the existence of such $\ep_a$ follows from the nondegeneracy of $C$.  The derivation of \cref{eq:normqsys} is then straightforward.
\end{proof}

\begin{remark}
The normalized $Q$-systems also have a direct interpretation in terms of $T$-systems.  These are relations among $q$-characters of Kirillov-Reshetikhin modules, in variables $\{T_n^{(a)}(u)\}$ where $n$ and $a$ are as before and $u \in \BC$ is a spectral parameter.  In the simply-laced case, the relations are
\[
T^{(a)}_{n}(u+1)T^{(a)}_{n}(u-1) = T^{(a)}_{n-1}(u)T^{(a)}_{n+1}(u) + \prod_{b \neq a} (T^{(b)}_{n}(u))^{-C_{ba}}.
\]
By forgetting the spectral parameter $u$, we obtain the usual $Q$-system, but by forgetting instead the parameter $n$ we obtain the normalized $Q$-system.  A similar statement holds for the twisted case, with some subtlety in that we must only consider $u$ modulo a certain additive constant.  
\end{remark}

Given a finite-type Cartan matrix $C$, we let $A^{(1)}_k,\dots,A^{(2r)}_k$ denote the cluster variables associated with the seed $\wh{\mu}^k_\si(\Si_C)$ for $k \in \BZ$.  Recall from \cref{defn:Cseed} that the exchange matrix of $\Si_C$ is 
\[
B_C \coloneqq \begin{pmatrix} 0 & C^t \\ -C^t & 0 \end{pmatrix},
\]
the mutation sequence $\wh{\mu}$ is $\mu_1 \circ \cdots \circ \mu_r$, and $\si$ interchanges $i$ and $i+r$.  As elements of the (upper) cluster algebra $\BC[\CA_{|\Si_C|}]$ the relations among the $A^{(i)}_k$ are in fact equivalent to normalized $Q$-systems under the identification $A^{(i)}_k \mapsto \wt{Q}^{(i)}_k$.  Note that $A^{(i+r)}_k = A^{(i)}_{k+1}$ for $1 \leq i \leq r$, so we lose no information by restricting our attention to $A^{(1)}_k,\dots,A^{(r)}_k$.   

\begin{thm}\label{prop:qsystemsasclusters}
Let $C$ be a finite-type Cartan matrix, and $A^{(1)}_k,\dots,A^{(r)}_k$ cluster variables associated with $\wh{\mu}^k_\si(\Si_C)$.
\begin{enumerate}
\item \emph{(\cite{Kedem2013,DiFrancesco2013})} If $C$ is of simply-laced type $X_N$, the relations among the cluster variables $A_k^{(i)}$ coincide with those of the normalized $Q$-system of type $X_N^{(1)}$.
\item If $C$ is of nonsimply-laced type $Y_M$, the relations among the cluster variables $A_k^{(i)}$ coincide with those of the normalized $Q$-system of the associated twisted type $X_N^{(\ka)}$.
\end{enumerate}
\end{thm}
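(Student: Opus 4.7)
The plan is to derive the exchange relation at vertex $k \in \{1,\dots,r\}$ directly from $B_C$ and then match it against the uniform form of the normalized $Q$-system established in the preceding proposition. Observe that row $k$ of $B_C$ is supported on columns $r{+}1,\dots,2r$, with $B_{k,k+r} = C_{kk} = 2$ the unique positive entry and $B_{k,j+r} = C_{jk} \leq 0$ for $j \neq k$. Substituting into \cref{eq:Atrans} gives
\[
A_k \cdot A'_k = A_{k+r}^2 + \prod_{j \neq k} A_{j+r}^{-C_{jk}}.
\]

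Next I would verify that the mutations $\mu_1,\dots,\mu_r$ pairwise commute, so that $\wh{\mu}$ is unambiguously defined and the new variables $A'_1,\dots,A'_r$ are independent of the order of composition. This holds because the upper-left $r \times r$ block of $B_C$ vanishes and remains zero after a mutation at any such index: the correction $\tfrac{1}{2}(|B_{ik}|B_{kj}+B_{ik}|B_{kj}|)$ in \cref{eq:matmut} is zero whenever $B_{kj}=0$, which is the case throughout $\{1,\dots,r\}$. In particular $A_{r+1},\dots,A_{2r}$ are fixed by $\wh{\mu}$, so the right-hand side of the relation above is well-defined regardless of the chosen composition order.

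I would then translate the relation into the indexing $A^{(i)}_n = (\wh{\mu}_\si^*)^n(A_i)$. The defining identity $\wh{\mu}_\si^*(A_i) = A'_{\si^{-1}(i)}$, with $\si$ interchanging $i$ and $i+r$, gives $A^{(i)}_1 = A'_{i+r} = A_{i+r}$ for $i \leq r$, so that $A^{(i+r)}_n = A^{(i)}_{n+1}$ by iteration. The exchange relation therefore becomes
\[
A^{(k)}_0 \cdot A^{(k)}_2 = (A^{(k)}_1)^2 + \prod_{j \neq k} (A^{(j)}_1)^{-C_{jk}},
\]
and applying $(\wh{\mu}_\si^*)^{n-1}$ to both sides shifts this to
\[
A^{(k)}_{n-1} \cdot A^{(k)}_{n+1} = (A^{(k)}_n)^2 + \prod_{j \neq k} (A^{(j)}_n)^{-C_{jk}}
\]
for every $n \in \BZ$.

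Finally I would match this with the normalized $Q$-system \cref{eq:normqsys}. By the preceding proposition, for $X_N^{(\ka)}$ of either simply-laced untwisted type or one of the twisted types $A_{2r-1}^{(2)}, D_{r+1}^{(2)}, E_6^{(2)}, D_4^{(3)}$, the normalized $Q$-system takes exactly this uniform form with $C$ the Cartan matrix of the associated finite type $Y_M$. Under the identification $A^{(i)}_n \leftrightarrow \wt{Q}^{(i)}_n$ the recurrence just derived coincides with the normalized $Q$-system, proving both parts of the theorem. There is no serious obstacle; the entire argument is essentially a bookkeeping verification that the $2r \times 2r$ exchange matrix $B_C$ encodes the normalized $Q$-system recurrence, with the distinction between the simply-laced and twisted cases absorbed entirely into the choice of $C$.
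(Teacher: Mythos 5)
Your proposal is correct and follows exactly the route the paper intends: the paper's proof simply declares the result "a straightforward check involving the definition of the exchange matrix $B_C$ and the cluster automorphism $\wh{\mu}_\si$," and your argument is precisely that check carried out in detail (row $k$ of $B_C$ feeding into the exchange relation, commutativity of $\mu_1,\dots,\mu_r$ from the vanishing diagonal blocks, and propagation in $n$ by iterating $\wh{\mu}_\si^*$, matched against the normalized $Q$-system). No discrepancies to report.
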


\begin{proof}
Given the definition of the normalized $Q$-systems in \cref{eq:normqsys}, this is a straightforward check involving the definition of the exchange matrix $B_C$ and the cluster automorphism $\wh{\mu}_\si$.
\end{proof}

\begin{thm}\label{prop:qsystemintegrability}
For $X_N^{(\ka)}$ of twisted type or simply-laced untwisted type, the corresponding $Q$-system is discrete integrable in the Liouville sense.
\end{thm}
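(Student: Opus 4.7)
The plan is to deduce Liouville integrability on the $\CA$-side from the integrability of the factorization dynamics established on the $\CX$-side in \cref{prop:factorization}, using the fact that when $C$ is of finite type the two are linked by a finite isogeny of tori. The first step is to observe that the exchange matrix
\[
B_C = \begin{pmatrix} 0 & C^t \\ -C^t & 0 \end{pmatrix}
\]
satisfies $\det B_C = (-1)^r \det(C)^2 \neq 0$, since in all the cases covered by the theorem the matrix $C$ is the Cartan matrix of the finite type $Y_M$ from the table. Consequently the map $p_{\Si_C}\colon \CA_{\Si_C}\to \CX_{\Si_C}$, defined by $p_{\Si_C}^*X_i = \prod_j A_j^{(B_C)_{ij}}$, is an isogeny of $2r$-dimensional algebraic tori, and in particular is finite and étale.

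Next I would use this isogeny to transport the structure needed for Liouville integrability. The Poisson bracket $\{X_i,X_j\} = (B_C)_{ij}d_j X_iX_j$ is nondegenerate, hence symplectic, and its pullback along $p_{\Si_C}$ endows $\CA_{\Si_C}$ with a symplectic form. By \cref{prop:factorization}, the cluster automorphism $\wh{\mu}_\si$ on $\CX_{\Si_C}$ coincides with a factorization mapping on $G_{\Ad}^{c,c}/H_{\Ad}$ that is Liouville integrable, with commuting Hamiltonians given by pulling back the conjugation-invariant functions from $G_{\Ad}$ along $x_{\mb i}$. Pulling these back further along $p_{\Si_C}$ yields a family of functions on $\CA_{\Si_C}$ which, since $p_{\Si_C}$ is étale, remain Poisson-commuting and retain generically independent differentials; they are also preserved by the $\CA$-side cluster automorphism, because \cref{prop:amalgcommutes} and the general compatibility $p \circ \mu_k = \mu_k \circ p$ intertwine the $\CA$- and $\CX$-side dynamics.

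Finally, \cref{prop:qsystemsasclusters} identifies the iteration of this $\CA$-side cluster automorphism with the evolution of the normalized $Q$-system of type $X_N^{(\ka)}$ under $A_k^{(i)} \mapsto \wt{Q}_k^{(i)}$, and the number of Hamiltonians is the required $r$ since this matches the dimension of a Lagrangian in $\CA_{\Si_C}$. Combining these, the pulled-back Hamiltonians provide a Liouville integrable structure preserved by the $Q$-system dynamics. The main subtlety is verifying that Liouville integrability genuinely transfers along the isogeny $p_{\Si_C}$: one needs that independence of differentials, Poisson commutativity, and invariance under the dynamics are all preserved under finite étale pullback between symplectic varieties, which is standard but deserves a brief argument. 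Once this is in place, the theorem follows by combining \cref{prop:factorization,prop:qsystemsasclusters} with the nondegeneracy of $B_C$.
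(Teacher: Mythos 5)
Your proposal is correct and follows essentially the same route as the paper: nondegeneracy of $B_C$ makes $p_{\Si_C}\colon \CA_{\Si_C}\to\CX_{\Si_C}$ a finite cover, along which the symplectic structure and the invariant Hamiltonians of \cref{prop:continuousintegrability} are transported, and the intertwining of the automorphisms $\wh{\mu}_\si$ on the two sides together with \cref{prop:factorization} and \cref{prop:qsystemsasclusters} yields the integrability of the $Q$-system dynamics. The only step the paper makes explicit that you leave implicit is the (trivial) final remark that the normalized and unnormalized $Q$-systems differ by an invertible rescaling by roots of unity, so integrability of the normalized version implies that of the original.
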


\begin{proof}
The statement should be understood in light of \cref{prop:qsystemsasclusters}, which says that incrementing the discrete time variable $n$ of the (normalized) $Q$-system is equivalent to expanding the rational symplectomorphism $\wh{\mu}_\si$ of $\CA_{\Si_C}$ in terms of cluster variables.  Since the matrix $B_C$ is nondegenerate, the canonical map $p_{\Si_C}: \CA_{\Si_C} \to \CX_{\Si_C}$ is a finite cover.  In particular, $\CA_{\Si_C}$ inherits from $\CX_{\Si_C}$ a symplectic structure and the integrable system of \cref{prop:continuousintegrability}.  Since $p_{\Si_C}: \CA_{\Si_C} \to \CX_{\Si_C}$ intertwines the associated automorphisms $\wh{\mu}_\si$ of $\CA_{\Si_C}$ and $\CX_{\Si_C}$, and the latter preserves the integrable system on $\CX_{\Si_C}$ by \cref{prop:factorization}, the former is also discrete integrable.  Since the normalized and unnormalized $Q$-systems differ by an invertible rescaling, the integrability of the normalized $Q$-system implies that of the unnormalized version.
\end{proof}

\section{The Twist Automorphism}

Since amalgamation commutes with mutation, the mutation sequence of $\Si_C$ studied in the previous sections lifts to a mutation sequence on the double Bruhat cell $G^{c,c}$ itself.  We now show that this sequence is intimately connected with the twist automorphism of $G^{c,c}$.  Specifically, any two clusters related by the corresponding sequence of cluster transformations are also mapped to each other by the twist automorphism.  Equivalently, the twist pulls back cluster variables to cluster monomials of the seed obtained by this mutation sequence.  While these pullbacks are generally not cluster variables, the unfrozen cluster variables are taken to monomials with only a single unfrozen factor, so in this sense the twist acts by a change of coefficients.  From the perspective of Poisson geometry this is quite natural; it is known that the twist automorphism is Poisson \cite{Gekhtman2002}, hence both twisted and untwisted cluster variables have quadratic brackets with respect to the standard Poisson-Lie structure.   

\begin{thm}\label{prop:twistasclustertrans}
Let $G$ be a symmetrizable Kac-Moody group, $\tau$ the twist automorphism of $G^{c,c}$, and $\CA_{\Si} \subset G^{c,c}$ the toric chart associated with a seed $\Si$.  Then $\tau$ restricts to an isomorphism of $\CA_{\Si}$ onto $\CA_{\wh{\mu}(\Si)}$, where $\wh{\mu} = \mu_1 \circ \cdots \mu_r$ is the mutation sequence consisting of a single mutation at each unfrozen index.  In particular, if $\{A_i\}$ and $\{A'_i\}$ are the cluster variables associated with $\Si$ and ${\wh{\mu}(\Si)}$, respectively, then the $\{A'_i\}$ and the twisted cluster variables $\{\tau^*(A_i)\}$ are Laurent monomials in each another.  If $\Si$ is the seed associated with the double reduced word $\mb{i} = (-1,\dots,-r,1,\dots,r)$, this transformation is explicitly given by

\[
A'_i = \prod_{j \in I} (\tau^* A_j)^{M_{ij}},
\]
where $M$ is the $I \times I$ matrix with entries
\[
M_{j,k} = \begin{cases*} \< \om_{|i_j|} | \al_{|i_k|}^\vee \> \quad(= \de_{jk}) & $1 \leq j, k \leq r$ \\
\< c\om_{|i_j|} | \al_{|i_k|}^\vee \> & $j > r$ and $k < 0$ \\
\< c^{-1}\om_{|i_j|} | \al_{|i_k|}^\vee \> & $j < 0$, and $k > r$ or $k < -r$ \\
0 & otherwise.\end{cases*}
\]
\end{thm}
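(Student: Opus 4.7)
The plan is to combine Proposition \ref{prop:dblbruhatensemble} with the factorization identity from the proof of Theorem \ref{prop:factorization}, applied both to the distinguished seed $\Si_{\mb{i}}$ and to its mutation $\wh{\mu}(\Si_{\mb{i}})$. My first step would be to identify $\wh{\mu}(\Si_{\mb{i}})$ with the seed $\Si_{\mb{i}'}$ of the reversed double reduced word $\mb{i}' = (1,\dots,r,-1,\dots,-r)$, obtained from $\mb{i}$ by swapping its positive and negative halves. This is strongly suggested by the identity $x_{\mb{i}}(X) = x_{\mb{i}'}(X')$ that already appears (in its amalgamated form) in the proof of Theorem \ref{prop:factorization}, and I would verify it rigorously by computing $\wh{\mu}(B_{\mb{i}})$ from Lemma \ref{lem:bmatrix} and the mutation rule and comparing with $B_{\mb{i}'}$ obtained directly from Definition \ref{def:wordseed}.

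With this identification in place, Proposition \ref{prop:dblbruhatensemble} applied to each of the two seeds yields
\[
\pi_G^* X_i = \prod_{j \in I} (\tau^* A_j)^{B^{\mathrm{mod}}_{ij}}, \qquad \pi_G^* X'_i = \prod_{j \in I} (\tau^* A'_j)^{(B')^{\mathrm{mod}}_{ij}}
\]
as regular functions on $G^{c,c}$. Both right-hand sides compute $\pi_G^*$ of coordinates on the same cell $G^{c,c}_{\Ad}$, but in terms of the two different Coxeter factorizations of each element. Since the cluster variables of $\Si_{\mb{i}}$ and $\Si_{\mb{i}'}$ have explicit realizations as generalized minors via Definition \ref{defn:Aminors}, combining the two relations pins down the monomial transformation between the $A'_i$ and the $\tau^* A_j$. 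The unfrozen part of the claim reduces to the identity
\[
\De^{\om_j}_{s_1 \cdots s_j,\, c^{-1}}(\tau(g)) = \De^{\om_j}_{e,\, s_r s_{r-1} \cdots s_{j+1}}(g) \qquad (1 \leq j \leq r),
\]
which I would verify directly from the definition \eqref{eq:twist} using the Gauss decomposition properties of $[\,\cdot\,]_\pm$ and the characterization of $\De^{\om}$ as a highest-weight matrix coefficient; the frozen analogues are similar.

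The main obstacle I anticipate is the bookkeeping required to match the matrix $M$ in the statement with the exponents produced by $B^{\mathrm{mod}}$, $(B')^{\mathrm{mod}}$, and the factorization identity. In particular, the entries $M_{jk} = \<c^{\pm 1}\om_{|i_j|} | \al_{|i_k|}^\vee\>$ for $j$ and $k$ in opposite frozen subsets arise precisely from tracking how the Coxeter element $c$ and its inverse translate the highest-weight vectors appearing in the $\mb{i}$- and $\mb{i}'$-parametrizations; in the symmetrizable Kac-Moody setting this also requires keeping track of the additional extension indices $-\wt{r},\dots,-(r+1)$ beyond the rank $r$. Once established for the distinguished seed $\Si_{\mb{i}}$, the statement for an arbitrary seed $\Si$ in the mutation class should follow from the fact that $\tau$ is a single birational automorphism of $G^{c,c}$ independent of any choice of cluster structure, combined with functoriality of $\wh{\mu}$ under mutation.
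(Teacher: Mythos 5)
Your plan for the distinguished seed has the right target identities, but as written it has two real gaps. The more serious one is the passage from $\Si_{\mb{i}}$ to an arbitrary seed $\Si$. Saying that this "should follow" because $\tau$ is one global automorphism "combined with functoriality of $\wh{\mu}$ under mutation" is not an argument: a mutation $\mu_k$ at an unfrozen index does not commute with the sequence $\wh{\mu}$ in any general sense, and the mere existence of $\tau$ tells you nothing about whether $\tau(\CA_\Si)$ equals the chart $\CA_{\wh{\mu}(\Si)}$, nor that the induced coordinate change stays monomial. The paper's mechanism here is \cref{lem:changeofcoefficients}: one verifies that $M$ satisfies $M_{ij}=\de_{ij}$ for $j$ unfrozen and $B_{\wh{\mu}(\Si)}=B_\Si M$ on unfrozen rows (using $B_{\wh{\mu}(\Si)}=-B_{\Si}$), and then the monomial isomorphism propagates through every mutation with $M$ replaced by the explicit $M'$ of \cref{eq:M'}; the induction closes only because the unfrozen-by-unfrozen block of $B_{\Si_{\mb{i}}}$ vanishes (cluster type $A_1^r$, part (2) of the lemma). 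Without this, or an equivalent inductive statement, the general-seed claim --- and the assertion that twisted unfrozen variables remain monomials with a single unfrozen factor in other clusters --- is unproven.

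Second, even at the distinguished seed the argument is not closed. Applying \cref{prop:dblbruhatensemble} to both $\mb{i}$ and $\mb{i}'$ expresses $\pi_G^*X_i$ and $\pi_G^*X'_i$ in terms of twisted cluster variables of the two seeds, but the two sets of $\CX$-coordinates are related by the nonmonomial transformation $\wh{\mu}$, so the two relations do not by themselves "pin down" a monomial relation between $\{A'_i\}$ and $\{\tau^*A_j\}$. The missing link is precisely the paper's \cref{lem:untwistedeval}: the $\mb{i}'$-cluster variables are Laurent monomials in the $\mb{i}$-$\CX$-coordinates (equivalently $\pi$ restricts to a finite cover $\CA_{\mb{i}'}\onto\CX_{\mb{i}}$), proved by evaluating the minors $\De^{\om_i}_{e,c^{-1}}$, $\De^{\om_i}_{e,e}$, $\De^{\om_i}_{c,e}$ on the explicit parametrization, with \cref{lem:coxetereq} doing the matching of $NB^{\mathrm{mod}}_{\Si}$ with $M$. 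Your fallback identity $\De^{\om_j}_{s_1\cdots s_j,\,c^{-1}}(\tau(g))=\De^{\om_j}_{e,\,s_r\cdots s_{j+1}}(g)$ is indeed the correct unfrozen statement, but the "frozen analogues" are not similar one-line identities: they are the genuinely monomial relations carrying the exponents $\<c^{\pm1}\om_{|i_j|}|\al^\vee_{|i_k|}\>$ (and, in the Kac--Moody case, the extension indices $-\wt{r},\dots,-(r+1)$), and no argument is offered for them; that computation is where the actual content of $M$ lives.
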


\begin{proof}
From \cref{lem:untwistedeval} and \cref{prop:dblbruhatensemble} it follows immediately that
\[
A'_i = \prod_{j \in I} (\tau^* A_j)^{(NB^{\mathrm{mod}}_{\Si})_{ij}},
\]
where $N$ is the matrix of \cref{lem:untwistedeval} and $B^{\mathrm{mod}}_{\Si}$ is the modified exchange matrix associated with $\Si$ as in \cref{prop:dblbruhatensemble}.  Most of the difficulty in verifying that the product of $N$ and $B^{\mathrm{mod}}_{\Si}$ is the given matrix $M$ is encapsulated in \cref{lem:coxetereq}.  For example, for $1 \leq i,k \leq r$, we may use it to compute
\begin{align*}
(NB^{\mathrm{mod}}_{\Si})_{i+r,-k} &= \< (c\om_i) - \om_i | \om_k^{\vee} + \sum_{j<k} C_{kj} \om_j^{\vee} \>\\
&= \< (c\om_i) - \om_i | \al_k^{\vee} - (\om_k^{\vee} + \sum_{j>k} C_{kj} \om_j^{\vee}) \>\\
&= \< (c\om_i) - \om_i | \al_k^{\vee} \> + \de_{ik}\\
&= \< c\om_i | \al_k^{\vee}\>.
\end{align*}

Given that $M = NB^{\mathrm{mod}}_{\Si}$, the theorem follows by verifying that $M$ satisfies the hypotheses of \cref{lem:changeofcoefficients} with respect to the exchange matrices $B_{\Si}$ and $B_{\wh{\mu}(\Si)}$.  Note that $B_{\wh{\mu}(\Si)} = -B_{\Si}$, as $\wh{\mu}(\Si)$ is associated with the double reduced word $(1,\dots,r,-1,\dots,-r)$.  This computation then parallels that of $M$ itself, again with \cref{lem:coxetereq} being the core of the calculation.
\end{proof}

\begin{remark}
If $C$ is of finite type, the decomposition of $M$ into $r$-by-$r$ blocks is
\[
M = \begin{pmatrix} 0 & 0 & c^{-1} \\ 0 & \mathrm{Id} & 0 \\ c & 0 & 0 \end{pmatrix}.
\]
Here we express $c$ as a matrix via its action on the fundamental weight basis, and order the indices by $(-1,\dots,-r,1,\dots,2r)$.
\end{remark}

\begin{lemma}\label{lem:changeofcoefficients}
Let $\Si$, $\wt{\Si}$ be two seeds with the same index set $I$ and unfrozen subset $I_u$.  For an invertible $I \times I$ matrix $M$, let $\vphi_M: \CA_{\wt{\Si}} \isom \CA_{\Si}$ be the isomorphism defined by
\begin{equation}\label{eq:vphiM}
\vphi_M^*( A_i ) = \prod_{j \in I} \wt{A}_j^{M_{ij}}.
\end{equation}
Suppose that $M$ satisfies the following conditions:
\begin{enumerate}
 \item $\wt{B}_{ij} = (BM)_{ij}$ when $i$ is unfrozen. 
\item $M_{ij} = \de_{ij}$ when $j$ is unfrozen.
\end{enumerate}
In particular $B_{ij} = \wt{B}_{ij}$ when $i$ and $j$ are both unfrozen, hence $\Si$ and $\wt{\Si}$ are of the same cluster type.  Then we have:
\begin{enumerate}
\item
The map $\vphi_M$ extends to an isomorphism between $\CA_{\mu_k(\wt{\Si})}$ and $\CA_{\mu_k(\Si)}$ for any unfrozen index $k$.  Specifically, if $M'$ is the $I \times I$ matrix defined by
\begin{equation}\label{eq:M'}
M'_{ij} = \begin{cases} M_{ij} & i \neq k \\ 2\de_{kj} - M_{kj} + \sum_{\ell \in I}([B_{k\ell}M_{\ell j}]_- - [B_{k\ell}]_-M_{\ell j})& i = k,\end{cases}
\end{equation}
then the corresponding isomorphism $\vphi_{M'}: \CA_{\mu_k(\wt{\Si})} \isom \CA_{\mu_k(\Si)}$ satisfies
\[
\begin{tikzcd}
\CA_{\wt{\Si}} \arrow{r}{\vphi_M} \arrow[dashed]{d}{\mu_k} & \CA_{\Si} \arrow[dashed]{d}{\mu_k} \\
\CA_{\wt{\Si}'} \arrow{r}{\vphi_{M'}} & \CA_{\Si'}.
\end{tikzcd}
\]
\item
If $B_{ij}=0$ when $i$ and $j$ are both unfrozen (so $\Si$, $\wt{\Si}$ are of cluster type $A_1^n$), then $\vphi_M$ extends to an isomorphism of $\CA$-spaces and upper cluster algebras.  
\end{enumerate}
\end{lemma}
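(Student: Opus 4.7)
The plan is to prove part (1) by explicit computation, comparing the two sides of the commutativity square on cluster variables, and to derive part (2) by iterating across the commuting mutations available in cluster type $A_1^n$. For part (1) we must show $\vphi_M^* \mu_k^* A_i' = \mu_k^* \vphi_{M'}^* A_i'$ for each $i \in I$. When $i \neq k$ the mutation leaves $A_i'$ alone, so one side collapses to $\vphi_M^* A_i = \prod_j \wt{A}_j^{M_{ij}}$, while on the other $\mu_k^*$ acts nontrivially only on the factor $\wt{A}_k'$; hypothesis (2) gives $M_{ik} = \de_{ik} = 0$, so the nontrivial factor drops out and the sides agree, which also confirms $M'_{ij} = M_{ij}$ for $i \neq k$.

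The substantive case is $i = k$. Expanding
\[
\mu_k^* A_k' = A_k^{-1}\biggl(\prod_j A_j^{[B_{kj}]_+} + \prod_j A_j^{[-B_{kj}]_+}\biggr),
\]
the pullback $\vphi_M^* \mu_k^* A_k'$ is a sum of two Laurent monomials in the $\wt{A}_\ell$ with explicit exponents $-M_{k\ell} + \sum_j [B_{kj}]_+ M_{j\ell}$ and $-M_{k\ell} + \sum_j [-B_{kj}]_+ M_{j\ell}$. On the other side $\vphi_{M'}^* A_k' = \prod_\ell \wt{A}_\ell^{M'_{k\ell}}$; applying $\mu_k^*$, the identity $M'_{kk} = 1$ (which follows from the stated formula by plugging $j = k$ and using $M_{\ell k} = \de_{\ell k}$ from hypothesis (2)) ensures that only the first power of $\mu_k^* \wt{A}_k'$ appears, so the result is again a binomial. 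Equating the two binomials monomial by monomial yields, for each $\ell \neq k$, two equations on $M'_{k\ell}$. The only consistent pairing is the direct one (the crossed pairing would force $\wt{B}_{k\ell} = -\wt{B}_{k\ell}$), and the difference of the two resulting equations is precisely hypothesis (1), $\sum_j B_{kj} M_{j\ell} = \wt{B}_{k\ell}$, so the system is consistent; the common solution rearranges, via standard $[x]_\pm$ identities, into the closed form for $M'_{k\ell}$ given in the statement.

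For part (2), the hypothesis $B_{ij} = 0$ for all unfrozen $i, j$ is preserved by any mutation at an unfrozen $k$, since the bilinear correction in \cref{eq:matmut} vanishes whenever both $i$ and $j$ are unfrozen and distinct from $k$. Hence mutations at distinct unfrozen indices commute and the mutation-equivalence class of $\Si$ is parametrized by subsets of $I_u$. To iterate part (1) I would verify that $M'$ again satisfies hypotheses (1) and (2) with respect to $\mu_k(\Si)$ and $\mu_k(\wt{\Si})$: (2) follows from the formula for $M'$ together with $\wt{B}_{kj} = B_{kj}$ when $j$ is unfrozen (a direct consequence of (1) and (2) for $\Si$), while (1) is a mechanical mutation-rule computation. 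Iterating, the chart isomorphisms glue compatibly into a global isomorphism $\CA_{|\wt{\Si}|} \isom \CA_{|\Si|}$, which passes to regular functions to yield the statement for upper cluster algebras. The main obstacle I expect is the sign bookkeeping in the $i = k$ binomial comparison, and the concomitant verification that hypotheses (1) and (2) propagate through mutation, since these propagation properties are precisely what make the iteration in part (2) assemble cleanly.
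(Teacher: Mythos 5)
Your route is the same as the paper's: check the square variable by variable, with hypothesis (2) disposing of $i\neq k$, hypothesis (1) forcing the $i=k$ comparison to be between two binomials with monomial ratio, and part (2) obtained by propagating the two hypotheses to the mutated seeds (hypothesis (2) unconditionally, hypothesis (1) using the vanishing of $B$ on unfrozen pairs) and iterating. All of that, including the check $M'_{kk}=1$ and the observation $\wt{B}_{kj}=B_{kj}$ for unfrozen $j$, matches the paper's argument.

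The one step I would not accept as written is the final claim in your $i=k$ analysis that the common solution ``rearranges, via standard $[x]_\pm$ identities, into the closed form'' of \cref{eq:M'}. Carrying out exactly the matching you describe, what the commuting square forces is
\[
M'_{kj} \;=\; 2\de_{kj} - M_{kj} - \sum_{\ell\in I}[B_{k\ell}]_-M_{\ell j} + \bigl[\wt{B}_{kj}\bigr]_-,
\qquad \wt{B}_{kj}=\sum_{\ell\in I}B_{k\ell}M_{\ell j},
\]
whereas \cref{eq:M'} has $\sum_{\ell}[B_{k\ell}M_{\ell j}]_-$ in place of $[\sum_{\ell}B_{k\ell}M_{\ell j}]_-$. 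Exchanging the negative part with the sum is not an identity: it holds precisely when, for each fixed $j$, the nonzero products $B_{k\ell}M_{\ell j}$ do not occur with mixed signs (for instance $B$ with $k$-th row $(0,1,-1)$ and $M$ with rows $(1,0,0)$, $(0,2,1)$, $(0,1,1)$ satisfies hypotheses (1)--(2) but the two expressions differ, and then the square commutes only with the first formula). So either derive and use the formula with $[\wt{B}_{kj}]_-$, which is what your consistency argument actually produces and which suffices for everything downstream (in particular hypothesis (2) for $M'$, hence part (2), follows from it just as easily), or verify the no-cancellation property explicitly in the situation where the lemma is applied (as in \cref{prop:twistasclustertrans}) before rewriting it as in \cref{eq:M'}; it is not a consequence of hypotheses (1)--(2) alone.
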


\begin{proof}
To prove the first claim one must check that for any cluster variable $A'_i$ on $\CA_{\Si'}$, we have $\vphi_{M'}^*A'_i = (\mu_k \circ \vphi_M \circ \mu_k)^*A'_i$.  The condition that $M_{ij} = \de_{ij}$ when $j$ is unfrozen ensures this holds for $i \neq k$.  The condition that $\wt{B}_{kj} = (BM)_{kj}$ ensures $(\mu_k \circ \vphi_M \circ \mu_k)^*A'_k$ is a Laurent monomial in the cluster variables on $\CA_{\wt{\Si}'}$, and the given formula for $M'$ follows from explicitly calculating this composition using \cref{eq:Atrans,eq:vphiM}.

The second claim follows inductively once we establish that $M'$ satisfies the same hypotheses as $M$, but with respect to the seeds $\Si'$, $\wt{\Si}'$.  That $M'_{ij} = \de_{ij}$ when $j$ is unfrozen can be checked generally without any assumptions on the cluster type of $\Si$.  On the other hand, a direct computation reveals that $B_{ij}$ vanishing when $i$ and $j$ are unfrozen is a sufficient condition to ensure $\wt{B}'_{ij} = (B'M')_{ij}$ when $i$ is unfrozen.
\end{proof}

When $C$ is not of finite type, we take $G_{\Ad}$ to be the maximal form of the adjoint group in the following statement.

\begin{lemma}\label{lem:untwistedeval}
Let $\CX_{\mb{i}} \subset G_{\Ad}^{c,c}$ be the toric chart associated with the double reduced word $\mb{i} = (-1,\dots,-r,1,\dots,r)$, and $\CA_{\mb{i}'} \subset G^{c,c}$ the chart associated with $\mb{i}' = (1,\dots,r,-1,\dots,-r)$.  Then the quotient map $\pi: G^{c,c} \onto G_{\Ad}^{c,c}$ restricts to a finite cover of $\CA_{\mb{i}'}$ onto $\CX_{\mb{i}}$.  Equivalently, the (pullbacks to $G^{c,c}$ of the) $\CX$-coordinates associated with $\mb{i}$ are Laurent monomials in the untwisted cluster variables associated with $\mb{i}'$.  In fact,
\[
A_i = \prod_{j \in I} (\pi^*X_j)^{N_{ij}},
\]
where
\[
N_{jk} = \begin{cases*} \< c \om_{|i_j|} | \om_{|i_k|}^\vee \> & $j > r$, $k < 0$ \\
\< c^{-1} \om_{|i_j|} | \om_{|i_k|}^\vee \> & $j < 0$, $k > r$ \\
\< \om_{|i_j|} | \om_{|i_k|}^\vee \> & otherwise. \end{cases*}
\]
\end{lemma}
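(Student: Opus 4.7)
The strategy is to evaluate the cluster variables $A^{(\mb{i}')}_k$ directly on an explicit lift of the $\CX$-parametrization $x_{\mb{i}}$ and read off the claimed monomial expression. Since the building blocks $X_j^{\om_{|i_j|}^\vee}$, $E_i$, and $F_i$ of the parametrization of \cref{defn:Xdef} have canonical lifts from $G_{\Ad}$ to $G$, the same factorized formula defines a lift $\wt x_{\mb{i}} \colon \CX_{\Si_{\mb{i}}} \to G^{c,c}$ satisfying $\pi \circ \wt x_{\mb{i}} = x_{\mb{i}}$. Evaluating a cluster variable on $\wt x_{\mb{i}}(X)$ then computes the corresponding pullback $\pi^* A^{(\mb{i}')}_k$ as an explicit function of the coordinates $X_j$.

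Writing each $A^{(\mb{i}')}_k$ as a generalized minor via \cref{defn:Aminors} and using the matrix-coefficient formula $\De^{\om_j}_{w,w'}(g) = \<\ol{w} v_j | g \ol{w'} v_j\>$, every cluster variable becomes a single matrix coefficient. Unwinding the definitions of $u_{\leq k}(\mb{i}')$ and $v_{>k}(\mb{i}')$ for $\mb{i}' = (1,\dots,r,-1,\dots,-r)$, the pair $(u_{\leq k}, v_{>k})$ equals $(e, c^{-1})$ for $-r \leq k \leq -1$, $(e, s_r \cdots s_{k+1})$ for $1 \leq k \leq r$, and $(s_1 \cdots s_{k-r}, e)$ for $r+1 \leq k \leq 2r$. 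Three standard identities then collapse each such coefficient to a Laurent monomial in the $X_j$'s: the generator $E_i$ fixes every highest weight vector (since $e_i v_j = 0$); the generator $F_i$ fixes $v_j$ whenever $i \neq j$; and the simple reflection representative $\ol{s_i}$ fixes $v_j$ whenever $i \neq j$.

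The three cases in the formula for $N_{jk}$ correspond to three regions of the pair $(j,k)$. When no nontrivial Weyl group representative acts on the relevant highest weight vector on either side, the exponent reduces to the naive pairing $\<\om_{|i_j|} | \om_{|i_k|}^\vee\>$. When $j > r$ lies in the frozen right block and $k < 0$ in the frozen left block, the cluster variable is the principal minor $\<\ol{s_1 \cdots s_{j-r}} v_{j-r} | \wt x_{\mb{i}}(X)\, v_{j-r}\>$; commuting the torus factor $X_k^{\om_{|i_k|}^\vee}$ past the representative $\ol{s_1 \cdots s_{j-r}}$ and the subsequent block of Chevalley generators rotates the effective weight from $\om_{|i_j|}$ to $c\om_{|i_j|}$. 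The symmetric case $j < 0$, $k > r$ yields the inverse rotation $c^{-1}\om_{|i_j|}$. A short auxiliary lemma to the effect that $\ol{s_1 \cdots s_j}\, v_j$ lies in the $c\om_j$-weight space of $L(\om_j)$, proved by induction via the identity $s_i \om_j = \om_j - \de_{ij}\al_i$, handles the core of this calculation.

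Finally, the finite-cover assertion follows from the nondegeneracy of $N$. Using the block decomposition induced by the partition $I = \{-r,\dots,-1\} \sqcup \{1,\dots,r\} \sqcup \{r+1,\dots,2r\}$, $N$ takes the form
\[
N = \begin{pmatrix} B & B & c^{-1}B \\ B & B & B \\ cB & B & B \end{pmatrix}, \qquad B_{ij} = \<\om_i | \om_j^\vee\>,
\]
where $c$ acts on the weight lattice via the basis $\{\om_i\}$. Since $B$ is nondegenerate and $c$ is invertible, $N$ is nondegenerate, and the $X_j$'s can be recovered as integer Laurent monomials in the $A^{(\mb{i}')}_k$. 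The main obstacle I anticipate is the detailed bookkeeping in the third step, in particular ensuring the Coxeter twists emerge with the correct orientation and that the non-contributing factors collapse consistently across all cases.
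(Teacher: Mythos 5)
Your computational core coincides with the paper's own proof: you identify the cluster variables of $\mb{i}'$ as the generalized minors $\De^{\om_i}_{e,c^{-1}}$, $\De^{\om_i}_{e,e}$, $\De^{\om_i}_{c,e}$ (your table of $(u_{\leq k},v_{>k})$ is correct), evaluate them on the factorized element of \cref{defn:Xdef}, and collapse each matrix coefficient to a monomial using $e_iv_j=0$, $f_iv_j=0$ for $i\neq j$, and the fact that $\ol{s_1\cdots s_j}\,v_j$ has weight $c\om_j$; the ``bookkeeping'' you defer is precisely the inductive computation the paper writes out for $\De^{\om_i}_{e,c^{-1}}$ and then says to repeat. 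One step, however, is false as stated: there is no lift $\wt{x}_{\mb{i}}$ with $\pi\circ\wt{x}_{\mb{i}}=x_{\mb{i}}$, because the fundamental coweights are cocharacters of $H_{\Ad}$ but not of the Cartan subgroup of $G$, so the factors $X_j^{\om^\vee_{|i_j|}}$ have no canonical preimage in $G$. The evaluation necessarily involves fractional powers of the $X_j$ (equivalently, one works on a finite cover of $\CX_{\Si_{\mb{i}}}$), which is visible in the half-integer entries of $N$ in the paper's $SL_2$ example; the honest single-valued statement is the inverse relation expressing $\pi^*X_j$ in the $A_i$, and the integrality of those exponents deserves a remark (for instance: $\pi^*X_j$ is a genuine rational function on $G^{c,c}$, so a monomial identity with rational exponents in the $A_i$ forces the exponents to be integers).

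Your finite-cover argument is also only valid in finite type, while the lemma is asserted for any symmetrizable Kac-Moody group. In that generality the relevant seeds are the maximal ones, so the negative index block has size $\wt{r}=r+\corank C$ rather than $r$, $N$ is not made of square $r\times r$ blocks, and $c-1$ is not invertible on the extended weight space (in affine type $c$ fixes the null direction), so the factorization of $\det N$ through $\det\bigl((c^{\pm1}-1)B\bigr)$ is unavailable. A repair along your lines is possible but needs a different mechanism: after row reduction the key $r\times r$ block is $\<(c^{-1}-1)\om_i\,|\,\om^\vee_k\>$, and since $c^{-1}\om_i-\om_i=-\al_i+\sum_{j>i}(\cdots)\al_j$ this block is triangular with $\pm1$ diagonal in simple-root coordinates, independently of any invertibility of $c-1$; the extra $\corank C$ rows and columns then still have to be handled separately. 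To be fair, the paper's displayed proof does not spell out the finite-cover clause either, but since you chose to include it, the argument as written does not cover the stated generality.
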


\begin{proof}
By \cref{defn:Aminors} the cluster variables associated with $\mb{i}'$ are generalized minors of the form $\De_{e,c^{-1}}^{\om_i}$, $\De_{e,e}^{\om_i}$, and $\De_{c,e}^{\om_i}$.  Calculating the matrix $N$ consists of evaluating such minors on an element of the form
\[
g=\left( \prod^{\curvearrowright}_{1 \leq i \leq \wt{r}} X_{-i}^{\om_i^{\vee}} \right) \left(\prod^{\curvearrowright}_{1 \leq i \leq r} F_i X_i^{\om_i^{\vee}}\right)\left(\prod^{\curvearrowright}_{1 \leq i \leq r} E_i X_{i+r}^{\om_i^{\vee}}\right).
\]
This involves fractional powers of the $X_i$, since the coweight subgroups themselves do not act on the fundamental representations, but only covering groups of them.  

By definition $\De_{e,c^{-1}}^{\om_i}(g) = \< v_i | g \ol{s_r} \cdots \ol{s_1} v_i \>$, where $v_i$ is a highest weight vector of the fundamental representation of highest weight $\om_i$.  The key point is that while the action of $E_i$ or $F_i$ on a vector of weight $\om$ is in general a sum of components with weights of the form $\om + n\al_i$, many of these can be discarded in the computation of a given generalized minor.  For example, one can check inductively that for $1 \leq k \leq r$,
\begin{align*}
\De_{e,c^{-1}}^{\om_i}(g) = \< v_i | \left(\prod^{\curvearrowright}_{1 \leq i \leq \wt{r}} X_{-i}^{\om_i^{\vee}}\right) \left(\prod^{\curvearrowright}_{1 \leq i \leq r} F_i X_i^{\om_i^{\vee}}\right) \left(\prod^{\curvearrowright}_{1 \leq i \leq k} E_i X_{i+r}^{\om_i^{\vee}}\right) \ol{s_k} \cdots \ol{s_1} v_i \> \left(\prod_{j = k+1}^{r} X_{j+r}^{\< s_{j} \cdots s_1 \om_i | \om_j^{\vee} \>}\right),
\end{align*}
and from this that
\begin{align*}
\De_{e,c^{-1}}^{\om_i}(g) &= \< v_i | \left(\prod^{\curvearrowright}_{1 \leq i \leq\wt{r}} X_{-i}^{\om_i^{\vee}}\right)\left(\prod^{\curvearrowright}_{1 \leq i \leq r} F_i X_i^{\om_i^{\vee}}\right) v_i \> \left(\prod_{j=1}^{r} X_{j+r}^{\< s_{j} \cdots s_1 \om_i | \om_j^{\vee} \>}\right) \\
&= \left(\prod_{j=1}^{\wt{r}} X_{-j}^{\< \om_i | \om_j^{\vee} \>}\right)\left(\prod_{j=1}^{r} X_{j}^{\< \om_i | \om_j^{\vee} \>}\right)\left(\prod_{j=1}^{r} X_{j+r}^{\< s_{j} \cdots s_1 \om_i | \om_j^{\vee} \>}\right)
\end{align*}
Since 
\[
\< c^{-1} \om_i | \om_j^{\vee} \> = \< s_{j} \cdots s_1 \om_i | \om_j^{\vee} \> = \< s_{r} \cdots s_1 \om_i | \om_j^{\vee} \>,
\]
we obtain the stated values of $N_{jk}$ when $j < 0$.  Note that up to a scalar factor this expression depends on choosing $\ol{s_i}$ as the representative of $s_i$ in $G$.  The remaining entries of $N$ can be computed following the same logic.
\end{proof}

\begin{lemma}\label{lem:coxetereq}
For $1 \leq i,k \leq r$, the Coxeter element $c = s_1 \cdots s_r$ satisfies
\begin{gather*}
\< (c \om_i) - \om_i | \om_k^\vee + \sum_{j>k} C_{kj}\om_j^\vee \> = - \de_{ik}, \\
\< (c^{-1} \om_i) - \om_i | \om_k^\vee + \sum_{j<k} C_{kj}\om_j^\vee \> = - \de_{ik}.
\end{gather*}
\end{lemma}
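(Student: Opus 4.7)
The plan is to prove the first identity by a telescoping argument on the Coxeter element and then observe that the second follows by the obvious left–right symmetry. Throughout, I will use only the fundamental relations $\<\om_j | \al_k^\vee\> = \de_{jk}$, $\<\al_\ell | \om_k^\vee\> = \de_{\ell k}$, $\<\al_\ell | \al_k^\vee\> = C_{k\ell}$, and the reflection formula $s_\ell \mu = \mu - \<\mu | \al_\ell^\vee\>\al_\ell$.

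First I would perform the standard telescoping
\[
c\om_i - \om_i = \sum_{\ell=1}^r \bigl( s_\ell s_{\ell+1}\cdots s_r\om_i - s_{\ell+1}\cdots s_r\om_i \bigr) = -\sum_{\ell=1}^r T_\ell\,\al_\ell,
\quad T_\ell \coloneqq \<s_{\ell+1}\cdots s_r\om_i \,|\, \al_\ell^\vee\>.
\]
Pairing with $\la_k \coloneqq \om_k^\vee + \sum_{j>k} C_{kj}\om_j^\vee$ and using $\<\al_\ell | \om_j^\vee\> = \de_{\ell j}$ collapses the sum to
\[
\<c\om_i - \om_i \,|\, \la_k\> = -T_k - \sum_{\ell > k} C_{k\ell}\,T_\ell,
\]
so it suffices to prove $T_k + \sum_{\ell > k} C_{k\ell}T_\ell = \de_{ik}$.

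The key step is then a second, partial telescoping: summing the identity $T_\ell\,\al_\ell = s_{\ell+1}\cdots s_r\om_i - s_\ell s_{\ell+1}\cdots s_r\om_i$ from $\ell = k$ to $r$ yields
\[
\sum_{\ell=k}^{r} T_\ell\,\al_\ell = \om_i - s_k s_{k+1}\cdots s_r\om_i.
\]
Pairing this with $\al_k^\vee$ (using $\<\al_\ell | \al_k^\vee\> = C_{k\ell}$ on the left and $s_k \al_k^\vee = -\al_k^\vee$ on the right, which converts the second term into $T_k$) gives
\[
2T_k + \sum_{\ell > k} C_{k\ell}\,T_\ell \;=\; \de_{ik} + T_k,
\]
which is exactly the required identity. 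Substituting back finishes the first claim.

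For the second identity, the same argument applies verbatim to the Coxeter element $c^{-1} = s_r s_{r-1}\cdots s_1$, with $T_\ell$ replaced by $T'_\ell \coloneqq \<s_{\ell-1}\cdots s_1\om_i \,|\, \al_\ell^\vee\>$ and the role of indices $>k$ replaced by indices $<k$; the roles of the two partial telescopings are interchanged but the computation is identical. I expect the only delicate point to be the sign bookkeeping in the telescoping (particularly the use of $s_k \al_k^\vee = -\al_k^\vee$ to match the two sides after pairing with $\al_k^\vee$); everything else is formal manipulation with the standard relations.
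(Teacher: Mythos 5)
Your proof is correct. All the steps check out: the first telescoping gives $c\om_i-\om_i=-\sum_\ell T_\ell\al_\ell$, pairing with $\om_k^\vee+\sum_{j>k}C_{kj}\om_j^\vee$ collapses correctly to $-T_k-\sum_{\ell>k}C_{k\ell}T_\ell$, and the second partial telescoping paired against $\al_k^\vee$ (using $W$-invariance of the pairing and $s_k\al_k^\vee=-\al_k^\vee$, plus $C_{kk}=2$) indeed yields $T_k+\sum_{\ell>k}C_{k\ell}T_\ell=\de_{ik}$; the argument for $c^{-1}$ is the mirror image, as you say. This is, however, a genuinely different route from the paper's. The paper first disposes of the case $k\geq i$ (using that $s_j\om_i=\om_i$ for $j\neq i$, so only $s_1\cdots s_i$ matters), then for $k<i$ proves by induction an explicit closed formula for $s_k\cdots s_i\om_i$ as a sum over increasing index sequences of products of Cartan matrix entries, and finishes by exhibiting a cancellation between two such sums; the second identity is obtained by reversing the labeling of the simple roots. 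Your double-telescoping argument avoids any explicit expansion of $s_k\cdots s_i\om_i$ and treats all $i,k$ uniformly, which makes it shorter and arguably more transparent; the paper's explicit formula has the side benefit of recording the actual coefficients of $c\om_i-\om_i$ in the simple-root basis, but that extra information is not needed for the lemma. Both arguments are purely formal in the Cartan data and thus valid in the symmetrizable Kac--Moody setting in which the lemma is applied.
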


\begin{proof}
The two statements are equivalent by reversing the labeling of the simple roots, so it suffices to prove the first.  The claim is immediate if $k \geq i$.  For $k < i$, note that
\[
\< (c \om_i) - \om_i | \om_k^\vee + \sum_{j>k} C_{kj}\om_j^\vee \> = \< (s_k \cdots s_i \om_i) - \om_i | \om_k^\vee + \sum_{j>k} C_{kj}\om_j^\vee \>.
\]
A simple induction yields
\[
s_k \cdots s_i \om_i = \om_i + \sum_{j = k}^i\left( \sum_{a_1=j<\dots<a_{\ell}=i}(-1)^{\ell} \prod_{m = 1}^{\ell-1} C_{a_m,a_{m+1}} \right) \al_j,
\]
where the sum is taken over increasing sequences of any length from $j$ to $i$, and the product is taken to equal $1$ when $\ell = 1$.  From this we compute that $\< (s_k \cdots s_i \om_i) - \om_i | \om_k^\vee + \sum_{j>k} C_{kj}\om_j^\vee \> $ is equal to 
\begin{gather*}
\left( \sum_{a_1=k<\dots<a_{\ell}=i}(-1)^{\ell} \prod_{m = 1}^{\ell-1} C_{a_m,a_{m+1}} \right) + \sum_{j = k+1}^i\left( \sum_{a_1=j<\dots<a_{\ell}=i}(-1)^{\ell} \prod_{m = 1}^{\ell-1} C_{a_m,a_{m+1}} \right) C_{kj},
\end{gather*}
which vanishes since the two sums cancel.
\end{proof}

\begin{example}
The simplest example is $SL_2$, where $c$ is the nonidentity element of $W$ and $\mb{i} = (-1,1)$, $\mb{i}' = (1,-1)$ are the only double reduced words for $(c,c)$.  Their respective cluster variables are just matrix entries:
\[
(A_{-1},A_1,A_2) = (\De_{12},\De_{22},\De_{21}), \quad (A'_{-1},A'_1,A'_2) = (\De_{12},\De_{11},\De_{21}).
\]
The parametrization associated with $\mb{i}$ is
\[
x_{\mb{i}} \colon (X_{-1},X_1,X_2) \mapsto (X_{-1}X_1X_2)^{-\frac12}\begin{pmatrix} X_{-1}X_1X_2 & X_{-1}X_1 \\ X_1X_2 & 1+X_1 \end{pmatrix}.
\]
From this we can directly evaluate the matrix $N$ of \cref{lem:untwistedeval}, and along with the matrix $B^{\mathrm{mod}}_{\Si}$ we have
\[
N =\frac12 \begin{pmatrix} 1 & 1 & -1 \\ 1 & 1 & 1 \\ -1 & 1 & 1 \end{pmatrix}, \quad B^{\mathrm{mod}}_{\Si} = \begin{pmatrix} 1 & 1 & 0 \\ -1 & 0 & -1 \\ 0 & 1 & 1 \end{pmatrix}.
\]
From this we compute the matrix $M$ of \cref{prop:twistasclustertrans}, and the matrix $M'$ of \cref{eq:M'}:
\[
M = \begin{pmatrix} 0 & 0 & -1 \\ 0 & 1 & 0 \\ -1 & 0 & 0 \end{pmatrix}, \quad M' = \begin{pmatrix} 0 & 0 & -1 \\ -1 & 1 & -1 \\ -1 & 0 & 0 \end{pmatrix}
\]
\cref{prop:twistasclustertrans} then says that the twisted cluster variables are determined from these by
\begin{equation}\label{eq:twistedminors}
A'_i = \prod_{j \in I} (\tau^* A_j)^{M_{ij}}, \quad A_i = \prod_{j \in I} (\tau^* A'_j)^{M'_{ij}}.
\end{equation}

On the other hand, by expanding \cref{eq:twist} we compute the following explicit formula for the twist:
\[
\tau \colon \begin{pmatrix} a & b \\ c & d \end{pmatrix} \mapsto \begin{pmatrix} db^{-1}c^{-1} & b^{-1} \\ c^{-1} & d \end{pmatrix}.
\]
From this we can compute the twisted cluster variables directly:
\[
(\tau^*A_{-1},\tau^*A_1,\tau^*A_2) = (\De_{21}^{-1},\De_{11},\De_{12}^{-1}), \quad (\tau^*A'_{-1},\tau^*A'_1,\tau^*A'_2) = (\De_{21}^{-1},\De_{12}^{-1}\De_{22}\De_{21}^{-1},\De_{12}^{-1}).
\]
Of course, this agrees with \cref{eq:twistedminors}, noting that $M$ and $M'$ are each their own inverses.  
\end{example}

\bibliography{QSFDv1Arxiv}

\newcommand{\etalchar}[1]{$^{#1}$}
\begin{thebibliography}{HKO{\etalchar{+}}02}

\bibitem[ACC{\etalchar{+}}11]{Alim2011}
M.~Alim, S.~Cecotti, C.~Clay, S.~Espahbodi, A.~Rastogi, and C.~Vafa.
\newblock {N = 2 Quantum Field Theories and Their BPS Quivers}.
\newblock {\em Preprint (arXiv:1112.3984v1)}, pages 1--93, 2011.

\bibitem[BFZ05]{Berenstein2005}
A.~Berenstein, S.~Fomin, and A.~Zelevinsky.
\newblock {Cluster algebras III: upper bounds and double Bruhat cells}.
\newblock {\em Duke Math. J.}, 126(1):1--52, 2005.

\bibitem[DK09]{DiFrancesco2013}
P.~{Di Francesco} and R.~Kedem.
\newblock {Q-systems as cluster algebras II: Cartan matrix of finite type and
  the polynomial property}.
\newblock {\em Lett. Math. Phys.}, 89(3):183--216, 2009.

\bibitem[DK10]{DiFrancesco}
P.~{Di Francesco} and R.~Kedem.
\newblock {Q-systems, heaps, paths and cluster positivity}.
\newblock {\em Communications in Mathematical Physics}, 2010.

\bibitem[DLT89]{Deift1989}
P.~Deift, L.C. Li, and C.~Tomei.
\newblock {Matrix factorizations and integrable systems}.
\newblock {\em Comm.Pure Appl. Math}, 42(4):443--521, 1989.

\bibitem[FG06]{Fock2006}
V.V. Fock and A.B. Goncharov.
\newblock {Cluster X-varieties, amalgamation, and Poisson-Lie groups}.
\newblock In {\em Algebraic geometry and number theory}, volume 253 of {\em
  Progr. Math.}, pages 27--68. Birkhauser, Boston, MA, 2006.

\bibitem[FG09]{Fock2003}
V.V. Fock and A.B. Goncharov.
\newblock {Cluster ensembles, quantization and the dilogarithm}.
\newblock {\em Ann. Sci. Ec. Norm. Super.}, 42(6):865--930, 2009.

\bibitem[FM13]{Fock2012}
V.V. Fock and A.~Marshakov.
\newblock {Integrable Systems, Cluster Variables, and Dimers}.
\newblock {\em In Preparation}, 2013.

\bibitem[FZ99]{Fomin1999}
S.~Fomin and A.~Zelevinsky.
\newblock {Double Bruhat cells and total positivity}.
\newblock {\em J. Amer. Math. Soc.}, 12(2):335--380, 1999.

\bibitem[FZ02]{Fomin2001}
S.~Fomin and A.~Zelevinsky.
\newblock {Cluster algebras I: foundations}.
\newblock {\em J. Amer. Math. Soc.}, 15(2):497--529, 2002.

\bibitem[FZ07]{Fomin2006}
S.~Fomin and A.~Zelevinsky.
\newblock {Cluster algebras IV: coefficients}.
\newblock {\em Compos. Math.}, 143(01):112--164, 2007.

\bibitem[GHK13]{Gross2013}
M.~Gross, P.~Hacking, and S.~Keel.
\newblock {Birational geometry of cluster algebras}.
\newblock {\em Preprint (arXiv:1309.2573)}, pages 1--50, 2013.

\bibitem[GLS12]{Geiss2012}
C.~Geiss, B.~Leclerc, and J.~Schr\"{o}er.
\newblock {Generic bases for cluster algebras and the Chamber Ansatz}.
\newblock {\em J. Amer. Math. Soc.}, 25(1):21--76, 2012.

\bibitem[GSV03]{Gekhtman2002}
M.~Gekhtman, M.~Shapiro, and A.~Vainshtein.
\newblock {Cluster Algebras and Poisson Geometry}.
\newblock {\em Mosc. Math. J.}, 3(3):899--934, 2003.

\bibitem[GSV11]{Gekhtman2011}
M.~Gekhtman, M.~Shapiro, and A.~Vainshtein.
\newblock {Generalized B\"{a}cklund-Darboux transformations for Coxeter-Toda
  flows from a cluster algebra perspective}.
\newblock {\em Acta Math.}, 206(2):245--310, 2011.

\bibitem[Her06]{Hernandeza}
D.~Hernandez.
\newblock {The Kirillov-Reshetikhin conjecture and solutions of T-systems}.
\newblock {\em J. Reine Angew. Math.}, 596:63--87, 2006.

\bibitem[Her07]{Hernandez}
D.~Hernandez.
\newblock {Drinfeld coproduct, quantum fusion tensor category and
  applications}.
\newblock {\em Proc. Lond. Math. Soc. (3)}, 95(3):567--608, 2007.

\bibitem[Her10]{Hernandez2010}
D.~Hernandez.
\newblock {Kirillov-Reshetikhin conjecture: the general case}.
\newblock {\em Int. Math. Res. Not.}, (1):149--193, 2010.

\bibitem[HKKR00]{Hoffmann2000}
T.~Hoffmann, J.~Kellendonk, N.~Kutz, and N.~Reshetikhin.
\newblock {Factorization dynamics and Coxeter-Toda lattices}.
\newblock {\em Comm. Math. Phys.}, 212(2):297--321, 2000.

\bibitem[HKO{\etalchar{+}}02]{Hatayama2002}
G.~Hatayama, A.~Kuniba, M.~Okado, T.~Takagi, and Z.~Tsuboi.
\newblock {Paths, crystals and fermionic formulae}.
\newblock In {\em MathPhys Odyssey}, volume~23 of {\em Prog. Math. Phys.},
  pages 205--272. Birkhauser Boston, Boston, MA, 2002.

\bibitem[Ked08]{Kedem2013}
R.~Kedem.
\newblock {Q -systems as cluster algebras}.
\newblock {\em J. Phys. A}, 41(19):14 pp., 2008.

\bibitem[KNS11]{Kuniba2011}
A.~Kuniba, T.~Nakanishi, and J.~Suzuki.
\newblock {T-systems and Y-systems in integrable systems}.
\newblock {\em J. Phys. A}, 44(10):146 pp., 2011.

\bibitem[KR87]{Kirillov1990}
A.N. Kirillov and N.~Reshetikhin.
\newblock {Representations of Yangians and multiplicities of occurrence of the
  irreducible components of the tensor product of representations of simple Lie
  algebras}.
\newblock {\em Zap. Nauchn. Sem. Leningrad. Otdel. Mat. Inst. Steklov. (LOMI)},
  160:211--221, 1987.

\bibitem[Kum02]{Kumar2002}
S.~Kumar.
\newblock {\em {Kac-Moody Groups, Their Flag Varieties, and Representation
  Theory}}, volume 204 of {\em Progr. Math.}
\newblock Birkhauser, Boston, MA, 2002.

\bibitem[MS13]{Marsh2013}
R.~Marsh and J.~Scott.
\newblock {Twists of Pl\"{u}cker coordinates as dimer partition functions}.
\newblock {\em Preprint (arXiv:1309.6630v1)}, pages 1--39, 2013.

\bibitem[MV91]{Moser1991}
J.~Moser and A.~Veselov.
\newblock {Discrete versions of some classical integrable systems and
  factorization of matrix polynomials}.
\newblock {\em Comm. Math. Phys.}, 139(2):217--243, 1991.

\bibitem[Nak03]{Nakajima2000}
H.~Nakajima.
\newblock {t-analogs of q-characters of Kirillov-Reshetikhin modules of quantum
  affine algebras}.
\newblock {\em Represent. Theory}, 7:259--274, 2003.

\bibitem[Ves91]{Veselov1991}
A.~Veselov.
\newblock {Integrable maps}.
\newblock {\em Russ. Math. Surv.}, 46(1):1--51, 1991.

\bibitem[Wil13a]{Williams}
H.~Williams.
\newblock {Cluster ensembles and Kac-Moody groups}.
\newblock {\em Adv. Math.}, 247:1--40, 2013.

\bibitem[Wil13b]{Williams2012}
H.~Williams.
\newblock {Double Bruhat cells in Kac-Moody groups and integrable systems}.
\newblock {\em Lett. Math. Phys.}, 103(4):389--419, 2013.

\end{thebibliography}
\bibliographystyle{alpha}

\end{document}